\newtheorem{theorem}{Theorem}[section]
\newtheorem{lemma}{Lemma}[section]
\numberwithin{equation}{section}
\title{\sffamily  Adaptive algorithms in sampling recovery 
\author{ 
Dinh D\~ung \\[5mm]
Information Technology Institute,
Vietnam National University, Hanoi\\
144 Xuan Thuy, Cau Giay, Hanoi, Vietnam\\
{\ttfamily dinhdung@vnu.edu.vn}\\[4mm]}}
 \date{\ttfamily February 26, 2011 - Version 0.2}
\def\II{{\mathbb I}}
\def\ZZ{{\mathbb Z}}
\def\RR{{\mathbb R}}
\def\NN{{\mathbb N}}
\def\Pp{{\mathcal P}}
\def\Cc{{\mathcal C}}
\def\sgn{\operatorname{sgn}}
\def\dimp{\dim_{\operatorname{p}}}
\begin{document}
\maketitle

\begin{abstract}
We study optimal algorithms in
 adaptive  sampling recovery of smooth functions defined on the unit $d$-cube ${\II}^d:= [0,1]^d$. 
The recovery error is measured in the quasi-norm $\|\cdot\|_q$ of $L_q := L_q(\II^d)$.
For $B$  a subset in $L_q,$ we  define a sampling recovery algorithm 
with the free choice of sample points and recovering functions from $B$ as follows. For each 
$f$ from the quasi-normed Besov space $B^\alpha_{p,\theta}$, 
 we choose $n$ sample points. This choice defines $n$ sampled 
values. Based on these sample points and sampled values, we choose a function from 
$B$ for recovering $f$.  The choice of $n$ sample points and a recovering function from $B$ for 
each $f \in B^\alpha_{p,\theta}$ defines a $n$-sampling algorithm $S_n^B$ by functions in $B$.
We suggest a new approach to investigate the optimal adaptive sampling recovery by 
$S_n^B$ in the sense of continuous non-linear $n$-widths which is
related to $n$-term approximation.
If $\Phi = \{\varphi_k\}_{k \in K}$ 
is a family of elements in $L_q$, let $\Sigma_n(\Phi)$ be the non-linear set
of linear combinations of $n$ free terms from $\Phi,$ that is
$
\Sigma_n(\Phi):= 
\{\, \varphi = \sum_{j=1}^n a_j \varphi_{k_j}: \ k_j \in K \, \}.
$
Denote by ${\mathcal G}$ the set of all families $\Phi$ in $L_q$ such that 
the intersection of $\Phi$ with any finite dimensional subspace in $L_q$ is 
a finite set, and by $\Cc(B^\alpha_{p,\theta}, L_q)$ the set of all continuous
mappings from $B^\alpha_{p,\theta}$ into $L_q$.  
We define the quantity
\begin{equation*}
\nu_n(B^\alpha_{p,\theta},L_q)
\ := \ \inf_{\Phi \in {\mathcal G}} \ \inf_{S_n^B \in \Cc(X, L_q): \ B= \Sigma_n(\Phi)} \ 
\sup_{\|f\|_{B^\alpha_{p,\theta}} \le 1} \ \|f -  S_n^B(f)\|_q.
\end{equation*}
Let $0 < p,q , \theta \le \infty $ and $\alpha > d/p$. 
Then we prove the asymptotic order
\begin{equation*} 
\nu_n(B^\alpha_{p,\theta},L_q) 
\ \asymp \ 
n^{- \alpha / d}. 
\end{equation*}
We also obtained the asymptotic order of quantities 
of optimal recovery by $S_n^B$ in terms of best 
$n$-term approximation as well of other non-linear $n$-widths.

\medskip
\noindent
{\bf Keywords} \ Adaptive sampling recovery $\cdot$ $n$-sampling algorithm $\cdot$  
B-spline quasi-interpolant representation $\cdot$  B-spline  $\cdot$  Besov space

\medskip
\noindent
{\bf Mathematics Subject Classifications (2000)} \ 41A46  $\cdot$  41A05  $\cdot$
  41A25  $\cdot$  42C40
  
\end{abstract}

\section{Introduction} 

The purpose of the present paper is to investigate optimal algorithms in 
adaptive sampling recovery of functions defined on the unit $d$-cube 
${\II}^d:= [0,1]^d.$ Let $L_q :=L_q({\II}^d), \ 0 < q \le \infty,$
denote the quasi-normed space 
of functions on ${\II}^d$ with the usual $q$th integral quasi-norm 
$\|\cdot\|_q$ for $0 < q < \infty,$ and 
the normed space $C({\II}^d)$ of continuous functions on ${\II}^d$ with 
the max-norm $\|\cdot\|_{\infty}$ for $q = \infty$. For $0 < p,\theta, q \le \infty$ and $\alpha > 0$,
let $B^\alpha_{p,\theta}$ be the quasi-normed Besov space with smoothness $\alpha$, equipped 
with the quasi-norm $\|\cdot\|_{B^\alpha_{p,\theta}}$ (see Section \ref{Preliminary} for the definition).
We consider problems of adaptive sampling recovery of functions from 
$B^\alpha_{p,\theta}$. The recovery error will be measured in the quasi-norm $\|\cdot\|_q.$ 

We first recall some well-known non-adaptive sampling recovery algorithms. 
Let $X$ be a quasi-normed space of functions defined on $\II^d$, such that the linear functionals
$f \mapsto f(x)$ are continuous for any $x \in \II^d$. We assume that $X \subset L_q$ and the 
embedding $\operatorname{Id}: X \to L_q$ is continuous, where $\operatorname{Id}(f):=f$. 
Suppose that $f$ is a function in $X$ and $\xi_n = \{x^k\}_{k =1}^n$ are  $n$  points 
in ${\II}^d.$ We want to approximately recover $f$
 from the sampled values 
$f(x^1), f(x^2), ..., f(x^n)$.
A classical linear 
sampling algorithm of recovery is
\begin{equation} \label{L(f)}
L_n(f) \ = \ L_n(\Phi_n,\xi_n,f) 
:= \ \sum_{k =1}^n f(x^k)\varphi_k,
\end{equation}
where 
$\Phi_n = \{\varphi_k\}_{k =1}^n$ are given  
$n$ functions in $L_q$. 
 A more general sampling algorithm of recovery can be defined as 
\begin{equation} \label{R(f)}
R_n(f) = R_n(H_n,\xi_n, f):= H_n(f(x^1),...,f(x^n)), 
\end{equation}
where $H_n$ is a given mapping from 
${\mathbb R}^n$ to $L_q$. Such a sampling algorithm is, in general, non-linear.
Denote by $SX$ the unit ball in the quasi-normed space $X$. 
To study  optimal sampling algorithms of recovery for $f \in X$ from 
$n$ their values by algorithms of the form \eqref{R(f)},  one can use the quantity
\begin{equation*} 
g_n(X,L_q) 
\ := \ \inf_{H_n,\xi_n} \  \sup_{f \in SX} \, \|f - R_n(H_n,\xi_n,f)\|_q, 
\end{equation*}
where the infimum is taken over all sequences $\xi_n = \{x^k\}_{k =1}^n$ 
and all mappings $H_n$ from $\RR^n$ into $L_q$. 

We use the notations: $x_+ := \max(0,x)$ for $x \in \RR;$  
$A_n(f) \ll B_n(f)$ if $A_n(f) \le CB_n(f)$ with 
$C$ an absolute constant not depending on $n$ and/or $f \in W,$ and 
$A_n(f) \asymp B_n(f)$ if $A_n(f) \ll B_n(f)$ and $A_n(f) \ll A_n(f).$ 
It is known the following result 
(see \cite{Di1}, \cite{K}, \cite{No}, \cite{NoT}, \cite{Te} and references there).
If $0 < p,\theta, q \le \infty$ and $\alpha > d/p$, 
then there is a linear sampling recovery method $L_n^*$
of the form \eqref{L(f)} such that  
\begin{equation} \label{asympG}
g_n(B^\alpha_{p,\theta}, L_q) 
\ \asymp \ 
\sup_{f \in SB^\alpha_{p,\theta}} \ \|f - L_n^*(f)\|_q 
\ \asymp \ 
n^{- \alpha /d + (1/p - 1/q)_+}. 
\end{equation}
This result says that the linear sampling algorithm
$L_n^*$ is asymptotically optimal in the sense that
any sampling algorithm $R_n$
of the form \eqref{R(f)} does not give the rate of convergence better than 
$L_n^*.$

Sampling algorithms of recovery of the form \eqref{R(f)} 
which may be linear or non-linear
 are non-adaptive, i.e., 
 the points $\xi_n = \{x^k\}_{k =1}^n$ 
at which the values $f(x^1),...,f(x^n)$ are sampled,
and the method for construction of recovering functions
are the same for all functions $f \in X.$  Let us introduce a setting 
of adaptive sampling recovery.  

If $B$ is a subset in $L_q$, we define a sampling algorithms of recovery 
with the free choice of sample points and recovering functions from $B$ as follows. For each 
$f \in X$ we choose a set of $n$ sample points. This choice defines a collection of $n$ sampled 
values. Based on the information of these sampled values, we choose a function from 
$B$ for recovering $f$. 
The choice of $n$ sample points and a recovering function from $B$ for 
each $f \in X$ defines a sampling algorithms of recovery $S_n^B$ by functions in $B$. 
More precisely, a formal definition of $S_n^B$ is given as follows. Denote by 
${\mathcal I}^n$ the set of subsets $\xi$ in ${\II}^d$ of cardinality at most $n$,
${\mathcal V}^n$ the set of subsets $\eta$ in $\RR \times {\II}^d$ of cardinality at most $n$. 
Let $T_n$ be a mapping from $X$ into ${\mathcal I}^n$. 
Then $T_n$ generates an
{\em $n$-sampling operator} $I_n$ from $X$ into ${\mathcal V}^n$ which is defined as follows. If 
$T_n(f)= \{x^1,...,x^n\}$ then $I_n(f)= \{(f(x^1),x^1),...,(f(x^n),x^n)\}$. 
Let $P_n^B$ a mapping from ${\mathcal V}^n$ into $B$.
Then the pair $(I_n, P_n^B)$ generates the mapping $S_n^B$ from 
$X$ into $B$, by the formula
\begin{equation} \label{S_n^B(f)}
S_n^B(f):=  P_n^B(I_n(f)), 
\end{equation} 
which defines a {\em $n$-sampling algorithm} with the
free choice of $n$ sample points and approximant from $B$. 
We call the mapping $P_n^B$ a {\em recovering operator}.

Clearly, a linear sampling algorithm $L_n(\Phi_n,\xi_n,\cdot)$ defined in \eqref{L(f)} 
is a particular case of $S_n^B$. We are interested in adaptive $n$-sampling algorithms $S_n^B$ of special form
which are an extension of $L_n(\Phi_n,\xi_n,\cdot)$
to an $n$-sampling algorithm with the
free choice of $n$ sample points and $n$ functions $\Phi_n = \{\varphi_k\}_{k =1}^n$ for each $f \in X$. 
To this end we let $\Phi = \{\varphi_k\}_{k \in K}$ 
be a family of elements in $L_q$, and consider the non-linear set $\Sigma_n(\Phi)$
of linear combinations of $n$ free terms from $\Phi,$ that is
\begin{equation*}
\Sigma_n(\Phi):= 
\{\, \varphi = \sum_{j=1}^n a_j \varphi_{k_j}: \ k_j \in K \, \}.
\end{equation*}
Then for $B=\Sigma_n(\Phi)$,  an $n$-sampling algorithm $S_n^B$
is of the following form
\begin{equation} \label{eq:[S_n^Sigma]}
S_n^B(f) 
 \ = \  
\sum_{k \in Q(\eta)} a_k(\eta) \varphi_k,
\end{equation}
where $\varphi_k \in \Phi$, $\eta = I_n(f)$,  $Q(\eta) \subset K$ with
 $|Q(\eta)| \le n$ and $a_k$ are functions on ${\mathcal V}^n$.

We want to choose an 
$n$-sampling algorithm $S_n^B$ so that the error of this recovery 
$\|f -  S_n^B(f)\|_q$ is as smaller as possible.
Clearly, such an efficient choice  
 should be adaptive to $f$. 
To investigate the optimality of (non-continuous) adaptive recovery of functions $f$ from 
the quasi-normed space $X$ by $n$-sampling algorithms $S_n^B$ of the form \eqref{eq:[S_n^Sigma]}, 
we introduce the quantity $s_n(X,\Phi,L_q)$  as follows:
\begin{equation} \label{def:[s_n]}
s_n(X,\Phi,L_q)
\ := \  \ \inf_{S_n^B: \ B= \Sigma_n(\Phi)} \ \sup_{f \in SX} \ \|f -  S_n^B(f)\|_q.
\end{equation}

The definition \eqref{def:[s_n]} corrects a definition of $s_n(X,\Phi,L_q)$ 
which has been introduced and denoted by 
$\nu_n(SX,\Phi)_q$ and $s_n(SX,\Phi)_q$ in \cite{Di6} and \cite{Di7}, respectively.
 The quantity $s_n(X,\Phi,L_q)$ is directly related to non-linear $n$-term approximation.
We refer the reader to \cite{D}, \cite{Te1} for surveys on various aspects in the last direction.

The quantity $s_n(X,\Phi,L_q)$ depends on 
the family $\Phi$ and therefore, is not absolute in the sense
of $n$-widths or optimal algorithms. 
We suggest an approach to investigate the optimal adaptive sampling recovery by 
$S_n^B$ in the sense of continuous non-linear $n$-widths which is
related to $n$-term approximation too.
Namely, we consider the optimality in the restriction with only $n$-sampling algorithms of recovery 
$S_n^B$ of the form \eqref{eq:[S_n^Sigma]} and with a continuity assumption on them. 
Continuity assumptions on  approximation and recovery algorithms have their origin 
in the very old Alexandroff $n$-width which characterizes best continuous approximation algorithm
by $n$-dimensional topological complexes \cite{A} (see also \cite{Ti} for details). 
Later on, (continuous) manifold $n$-width was introduced by 
in \cite{DHM}, \cite{M}, and investigated in \cite{DY}, \cite{DKLT}, \cite{DiV}, \cite{Di2}, \cite{Di3},\cite{Di4}. 
Several continuous $n$-widths based on continuous algorithms of $n$-term    
approximation, were introduced and studied in \cite{Di2}, \cite{Di3},\cite{Di4}. 
The continuity assumption is quite natural: the closer objects
are the closer their reconstructions should be. A first look seems that a continuity restriction may
decrease the choice of approximants. However, in most cases it does not weaken the rate of the corresponding
approximation. Continuous and non-continuous algorithms of nonlinear approximation give the same asymptotic order. 
This motivate us to consider continuous $n$-sampling algorithms of recovery $S_n^B$. Since we assume that 
functions to be recovered are living in the quasi-normed space $X$ 
and the recovery error is measured in the
quasi-normed space $L_q$, the requirement that $S_n^B \in \Cc(X, L_q)$ is quite reasonable. 
(Here and in what follows, $\Cc(X,Y)$ denotes the set of all continuous
mappings from $X$ into $Y$ for the quasi-metric spaces $X,Y$). This leads to the following definition. 

Denote by ${\mathcal G}$ the set of all families $\Phi$ in $L_q$ such that 
the intersection of $\Phi$ with any finite dimensional subspace in $L_q$ is 
a finite set. We define the quantity
\begin{equation*}
\nu_n(X,L_q)
\ := \ \inf_{\Phi \in {\mathcal G}} \ \inf_{S_n^B \in \Cc(X, L_q): \ B= \Sigma_n(\Phi)} \ 
\sup_{f \in SX} \ \|f -  S_n^B(f)\|_q.
\end{equation*}
 The restriction $\Phi \in {\mathcal G}$ 
in the definition of $\nu_n(X,L_q)$ is minimal and natural for all well-known approximation systems.

 Another way to study optimal adaptive (non-continuous) $n$-sampling algorithms of recovery $S_n^B$
in the sense of nonlinear $n$-widths has been proposed in \cite{Di6}, \cite{Di7}. 
In this approach, $B$ is required to have a finite capacity 
which is measured by their cardinality or pseudo-dimension. 
Given a family ${\mathcal B}$ of subsets in $L_q$, we consider optimal 
sampling recoveries by $B$ from ${\mathcal B}$ in terms of the quantity
\begin{equation} \label{R_n}
R_n(W, {\mathcal B})_q := \  \inf_{B \in {\mathcal B}} \ \inf_{S_n^B} \ \sup_{f \in W} \ \|f -  S_n^B(f)\|_q.
\end{equation}
We assume a restriction on the sets $B \in {\mathcal B}$, requiring that they 
should have, in some sense, a finite capacity. In the present paper, the capacity of $B$  
is measured by its cardinality or pseudo-dimension. This reasonable restriction 
would provide nontrivial lower bounds of asymptotic order of 
$R_n(W, {\mathcal B})_q $ for well known function classes $W$.
Denote $R_n(W, {\mathcal B})_q$ by $e_n(W)_q$ 
 if ${\mathcal B}$ in \eqref{R_n} is the family of all subsets $B$
in $L_q$ such that $|B| \le 2^n$, where $|B|$ denotes the cardinality
of $B,$ and by $r_n(W)_q$ 
 if ${\mathcal B}$ in \eqref{R_n} is the family of all subsets $B$ in 
$L_q$ of pseudo-dimension at most $n$. 
The definition \eqref{R_n} corrects definitions of $e_n(W)_q$ and $r_n(W)_q$ 
introduced in \cite{Di7}.

 The quantity $e_n(W)_q$ is related to the entropy $n$-width (entropy number)
$\varepsilon_n(W)_q$  which is the functional inverse of the
classical $\varepsilon$-entropy introduced by Kolmogorov and Tikhomirov \cite{KT}. 
The quantity $r_n(W)_q$ is related to the non-linear $n$-width 
$\rho_n(W)_q$ introduced recently by Ratsaby and Maiorov \cite{RM1}. (See the definition 
of $\varepsilon_n(W)_q$ and $\rho_n(W)_q$ in Section \ref{NC_recovery}).

The pseudo-dimension of a  set $B$ of real-valued functions on 
a set $\Omega$, is defined as follows. 
For a real number~$t$, let $\sgn(t)$  be $1$ 
for $t>0$ and $-1$ otherwise. For $x \in {\RR}^n$, let  
$\sgn(x) = (\sgn(x_1),\sgn(x_2),...,\sgn(x_n))$.
The pseudo-dimension of $B$ is defined as the largest integer $n$ such that 
there exist points $a^1, a^2, \dots, a^n$ in $\Omega$ and 
$b \in {\RR}^n$ such that the cardinality of the set 
\begin{equation*}
 \{\,\sgn(y): \ y = (f(a^1) + b_1, f( a^2) + b_2, \dots, f( a^n) + b_n), 
\ f \in B \}
\end{equation*}
is $2^n$. If $n$ is arbitrarily large, then the the pseudo-dimension 
of $B$ is infinite.  Denote the pseudo-dimension of~$B$ by
$\dimp(B)$. 
The notion of pseudo-dimension was
introduced by Pollard \cite{P} and later Haussler \cite{H1} as an extention of
the  VC-dimension \cite{VC}, suggested by Vapnik-Chervonekis for sets of indicator functions.
The pseudo-dimension and VC-dimension measure the
capacity of a set of functions and are related to its $\varepsilon$-entropy  
(see also \cite{RM1}, \cite{RM2}). 
If $B$ is a $n$-dimensional linear manifold of real-valued functions on $\Omega$, 
then  $\dimp(B) = n$ (see \cite{H1}). 

We say that $ p,q , \theta, \alpha$ satisfy Condition \eqref{Condition} if
\begin{equation} \label{Condition}
\begin{aligned}
 0 <  p,q , & \theta \le \infty , \ 0 < \alpha < \infty, 
  \ {\rm and \ there \ holds \  
 one \ of \ the \ following \ restrictions:} \\
  {\rm (i)} & \  \alpha > d/p; \\
 {\rm (ii)} & \ \alpha = d/p, \ \theta \le \min(1,p), \ p,q < \infty.
 \end{aligned}
\end{equation}
Let ${\bf M}$ be the set of B-splines which are the tensor product of integer translated dilations 
of the centered cardinal spline of order $2r$ (see the definition in Section \ref{Preliminary}). 

The main results of the present paper are read as follows.

 \begin{theorem} \label{ThNU} 
Let  $ p,q , \theta, \alpha$ satisfy the Condition \eqref{Condition} and $\alpha < 2r$.
 Then for the $d$-variable Besov space 
$B^\alpha_{p,\theta}$,  there is 
the following asymptotic order
\begin{equation} \label{Asymp:[nu]}
s_n(B^\alpha_{p,\theta},{\bf M},L_q) 
\ \asymp \ 
\nu_n(B^\alpha_{p,\theta},L_q) 
\ \asymp \ 
r_n(SB^\alpha_{p,\theta})_q 
\ \asymp \ 
e_n(SB^\alpha_{p,\theta})_q 
\ \asymp \ 
n^{- \alpha / d} . 
\end{equation}
\end{theorem}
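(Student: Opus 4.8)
The plan is to establish, for each of the four quantities, matching upper and lower bounds of the common order $n^{-\alpha/d}$, and to organize the work by exploiting two structural comparisons. First, any $B$ with $|B|\le 2^n$ is a finite family of real functions, so $\dimp(B)\le\log_2|B|\le n$; hence the class competing in $e_n$ is contained in the class competing in $r_n$, and since the latter infimum runs over a larger class, $r_n(SB^\alpha_{p,\theta})_q \le e_n(SB^\alpha_{p,\theta})_q$. It therefore suffices to prove a lower bound for $r_n$ and an upper bound for $e_n$ to pin down both. Second, because every admissible recovery for $s_n$ outputs a value in $\Sigma_n({\bf M})$, one has the pointwise bound $\|f-S_n^B(f)\|_q \ge \sigma_n(f,{\bf M})_q$, where $\sigma_n(\cdot,{\bf M})_q$ is best $n$-term approximation by B-splines; thus $s_n(B^\alpha_{p,\theta},{\bf M},L_q)$ is bounded below by the best $n$-term B-spline approximation error on $SB^\alpha_{p,\theta}$. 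The common technical engine is the B-spline quasi-interpolant representation valid under Condition \eqref{Condition}: each $f\in B^\alpha_{p,\theta}$ expands as $f=\sum_k c_k(f)\,M_k$ in dyadically scaled tensor-product B-splines $M_k\in{\bf M}$, every coefficient $c_k(f)$ being a fixed finite linear combination of point values of $f$, with the Besov quasi-norm equivalent to a weighted sequence quasi-norm of $(c_k(f))$.

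For the upper bounds I would start from the known fact that, under Condition \eqref{Condition} and $\alpha<2r$, retaining the $n$ largest suitably weighted terms of the quasi-interpolant series approximates $f$ in $L_q$ to order $n^{-\alpha/d}$, uniformly over $SB^\alpha_{p,\theta}$. Since each retained $c_k(f)$ is computed from boundedly many samples of $f$, this near-best approximant is realized by an $n$-sampling algorithm with $B=\Sigma_n({\bf M})$ (absorbing the bounded sampling overhead per term into a harmless rescaling of $n$), giving $s_n\ll n^{-\alpha/d}$. For $e_n$ I would discretize the $n$ retained coefficients on a net of mesh comparable to the target error, producing a recovering set of cardinality $\le 2^{Cn}$ at the same accuracy, whence $e_n\ll n^{-\alpha/d}$ after rescaling. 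For $\nu_n$ the discontinuous greedy choice of active indices must be replaced by a continuous near-best selection, built from a partition of unity and continuous thresholding of the coefficient sequence, a device already employed for continuous non-linear $n$-widths in \cite{Di2}, \cite{Di3}, \cite{Di4}; this produces a map in $\Cc(B^\alpha_{p,\theta},L_q)$ with values in $\Sigma_n(\Phi)$ for some $\Phi\in{\mathcal G}$, still of error $n^{-\alpha/d}$.

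The lower bounds rest on a single packing. I fix the dyadic level $m$ with $2^{md}\asymp n$, take the $M\asymp n$ level-$m$ B-splines $M_k$ with boundedly overlapping supports, and set $f_\varepsilon=\delta\sum_k\varepsilon_k M_k$ for $\varepsilon\in\{\pm1\}^M$ with $\delta\asymp 2^{-m\alpha}$. The norm equivalence places every $f_\varepsilon$ in $SB^\alpha_{p,\theta}$, and a Gilbert--Varshamov code extracts $2^{cM}=2^{c'n}$ sign vectors pairwise differing in $\gg M$ coordinates, so by the $L_q$ norm equivalence the corresponding $f_\varepsilon$ are pairwise $\gg n^{-\alpha/d}$-separated. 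Choosing $M$ a large multiple of $n$, a pigeonhole argument against $2^n$ output points gives $e_n\gg n^{-\alpha/d}$, while the Sauer--Shelah bound relating $\dimp(B)\le n$ to the number of sign patterns $B$ realizes on $M$ points (as in \cite{RM1}) yields $r_n\gg n^{-\alpha/d}$; combined with $r_n\le e_n$ this fixes both. The lower bound for $s_n$ is then immediate from the reduction above, since best $n$-term B-spline approximation of $SB^\alpha_{p,\theta}$ is itself $\gg n^{-\alpha/d}$ on the same packing.

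The genuinely hard part is the lower bound for $\nu_n$. Here the best $n$-term reduction fails: the infimum over all dictionaries $\Phi\in{\mathcal G}$ would collapse the bound (a dense $\Phi$ makes best $n$-term error zero), so the continuity of $S_n^B$ must be used in an essential way. I would argue topologically on the packing: restricting $S_n^B$ to the sign sphere $\{\pm1\}^M$ and its convex symmetric hull, the condition $\Phi\in{\mathcal G}$ (every intersection of $\Phi$ with a finite-dimensional subspace is finite) is exactly what keeps the image an $n$-term set of controlled topological size, so that a Borsuk--Ulam / odd-mapping degree argument forbids a continuous $n$-term recovery from approximating all $f_\varepsilon$ to order better than $n^{-\alpha/d}$; this is the manifold-width circle of ideas behind \cite{DHM}, \cite{M}, \cite{Di2}, \cite{Di3}, \cite{Di4}. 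I expect two obstacles to demand the most care: making the continuous index-selection in the $\nu_n$ upper bound compatible simultaneously with the $n$-term structure, the sampling realizability, and membership in $\Cc(B^\alpha_{p,\theta},L_q)$; and setting up the degree argument so that $\Phi\in{\mathcal G}$ is genuinely the hypothesis that blocks the collapse. Finally, the boundary case (ii) of Condition \eqref{Condition}, with $\theta\le\min(1,p)$ and $p,q<\infty$, will require separate verification that the quasi-interpolant series converges and yields the clean rate $n^{-\alpha/d}$ with no logarithmic loss.
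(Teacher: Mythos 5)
Your upper-bound program is essentially the paper's: the quasi-interpolant representation of Theorem \ref{Theorem:Representation}, greedy retention of the largest coefficients for $s_n$, coefficient discretization for $e_n$ (the position-encoding cost across dyadic levels is summable, so the ``harmless rescaling'' you invoke does go through, as in \cite{Di7}), and, for $\nu_n$, replacement of hard thresholding by the continuous soft-thresholding operator $P_n$ of \cite{Di4} --- exactly the construction $G = Q_{\bar k} + \sum_k G_k$ of Theorem \ref{ThUpperBound}; your worry about sampling realizability is moot since each $c_{k,s}(f)$ is by construction a fixed finite linear combination of point values. Your shortcut $r_n \le e_n$ via $\dimp(B)\le \log_2|B|$ is correct and mildly economizes over the paper, which treats the two separately. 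Your lower bounds for $s_n$, $e_n$, $r_n$ via $s_n\ge\sigma_n$, $e_n\ge\varepsilon_n$, $r_n\ge\rho_n$ together with a single-level sign packing, Gilbert--Varshamov codes and a Sauer--Shelah count reconstruct in substance what the paper simply cites from \cite{Di7} (Theorems 5.1, 5.3, 5.5) and \cite{RM1}; that part is sound, though the $\sigma_n$ lower bound needs the bounded projector onto ${\bf V}(m)$ to neutralize $n$-term approximants using other levels, not merely the packing.

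The genuine gap is exactly where you predicted: the lower bound for $\nu_n$, and your specific plan has an identifiable misstep. A degree argument run over ``the sign sphere $\{\pm1\}^M$'' built on a Gilbert--Varshamov-separated family cannot work as stated: a finite separated set carries no topology with which to obstruct a continuous map, and antipodality needs a genuine sphere of dimension $\ge n$ lying \emph{inside} the Besov ball --- a Bernstein-width inclusion $tSY\cap L_{n+1}\subset W$, for which the code is superfluous (what is needed is that the whole scaled cube, not just codewords, lies in the ball). The paper formalizes this without any new degree argument: $\nu_n \ge \tau_n^B(SB^\alpha_{p,\theta},L_q)$ by \eqref{ineq:[nu>tau^B]}; the Bernstein inequality for ${\bf V}(k)$ from \cite{DP} embeds $C^{-1}2^{-\alpha k}U(k)$ into $SB^\alpha_{\infty,\theta}$; the DeVore--Popov projector $T_k$ together with \eqref{ineq:[tau_n^Y(W,X)]} and the stability relation \eqref{StabIneq.2} reduce everything to $\tau_n(B^m_\infty,\ell^m_q)$ with $m\asymp 2^{dk}\asymp n$; and Lemma \ref{tau>} bounds this below by $\frac{1}{2}(m-n-1)^{1/q}$ using only the trivial identity $b_{m-1}(B^m_\infty,\ell^m_\infty)=1$, the inequality $2\tau_n\ge b_n$ of \eqref{[tau_n(W,X)>]} from \cite{Di3} (this is where the Borsuk-type topology and the finiteness role of $\Phi\in{\mathcal G}$ are already encapsulated), and the multiplicative Lemma \ref{[tau_{n+m}(W,Y)<]} combined with Lemma \ref{C_Algorithm}, which gives $\tau_{m-n-1}(B^m_q,\ell^m_\infty)\le (m-n-1)^{-1/q}$. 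To complete your proposal, replace the packing by this cube inclusion and either cite \eqref{[tau_n(W,X)>]} or redo the antipodality argument of \cite{Di3}. Note finally that the paper's Theorem \ref{[Thnu>>]} needs only $\alpha>0$, so no boundary-case analysis arises in the lower bounds; case (ii) of Condition \eqref{Condition} enters only the upper bounds, and there it is handled by Lemma \ref{Inequality[|f - Q_k(f)|_q]} (cited from \cite{Di7}) rather than through the representation theorem, whose hypotheses indeed require $\alpha>d/p$.
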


Comparing this asymptotic order with \eqref{asympG}, we can see that 
for $0 <  p < q \le \infty,$ the asymptotic order of optimal adaptive
sampling recovery in terms of the quantities $s_n$, $\nu_n$, $e_n$ and $r_n$,
is better than the asymptotic order of any non-adaptive $n$-sampling algorithm of 
 recovery of the form \eqref{R(f)}. 

To prove the upper bound for \eqref{Asymp:[nu]}, 
we use a B-spline quasi-interpolant representation of functions in 
the Besov space $B_{p, \theta}^\alpha$ 
associated with some equivalent discrete quasi-norm \cite{Di6}, \cite{Di7}. 
On the basis of this representation we construct corresponding asymptotically optimal 
$n$-sampling algorithms of recovery  which give the upper bound for \eqref{Asymp:[nu]}. 
The lower bound of \eqref{Asymp:[nu]} is established by the lower estimating of the smaller 
related $n$-widths and the quantity of $n$-term approximation. 

The paper is organized as follows.
 
In Section \ref{Preliminary}, we give a definition of quasi-interpolant 
for functions on ${\II}^d$, describe a B-spine quasi-interpolant representation 
for Besov spaces $B^\alpha_{p,\theta}$  with 
a discrete quasi-norm in terms of the coefficient functionals. 
The proof of the asymptotic order of $\nu_n(B^\alpha_{p,\theta},L_q)$ 
in Theorem \ref{ThNU} is given in Sections \ref{C_recovery} and \ref{Lowerbounds}.
More precisely, in Section \ref{C_recovery}, we construct asymptotically optimal 
adaptive $n$-sampling algorithms of recovery  
which give the upper bound for $\nu_n(B^\alpha_{p,\theta},L_q)$ (Theorem \ref{ThUpperBound}). 
In Section \ref{Lowerbounds} we prove the lower bound for $\nu_n(B^\alpha_{p,\theta},L_q)$
(Theorem \ref{[Thnu>>]}). In Section \ref{NC_recovery}, we prove 
the asymptotic order of  $s_n(B^\alpha_{p,\theta},{\bf M},L_q)$, 
$r_n(SB^\alpha_{p,\theta})_q$ and $e_n(SB^\alpha_{p,\theta})_q$ 
in Theorem \ref{ThNU}.


\section{Preliminary background} 
\label{Preliminary}

For a given natural number $r,$ let  $M$ be the   
centered B-spline of even order $2r$ with support $[-r,r]$ and 
knots at the integer points $-r,...,0,...,r$ and define the B-spline 
\begin{equation*}
M_{k,s}(x):= \ M(2^k x - s), 
\end{equation*}
for a non-negative integer $k$ and $s \in \ZZ.$
To get the $d$-variable B-spline $M_{k,s}$ for a non-negative integer $k$ and $s \in {\ZZ}^d$, we let
\begin{equation*}
M(x):= \ \prod_{i=1}^d M(x_i), \ x = (x_1,x_2,...,x_d), 
\end{equation*}
and 
\begin{equation*}
M_{k,s}(x):= \ M(2^k x - s). 
\end{equation*}
 Denote by ${\bf M}$ the set of 
all $M_{k,s}$
which do not vanish identically on  ${\II}^d.$ 

Let $\Lambda = \{\lambda(j)\}_{j \in P^d(\mu)}$ be a finite even sequence, i.e., 
$\lambda(-j) = \lambda(j),$ where $P^d(\mu):= \{j \in  {\ZZ}^d: \ |j_i| \le \mu, \ i=1,2,...,d \}.$ 
We define the linear operator $Q$ for functions $f$ on ${\RR}^d$ by  
\begin{equation} \label{Q}
Q(f,x):= \ \sum_{s \in {\ZZ}^d} \Lambda (f,s)M(x-s), 
\end{equation} 
where
\begin{equation} \label{Lambda}
\Lambda (f,s):= \ \sum_{j \in P^d(\mu)} \lambda (j) f(s-j).
\end{equation}
The operator $Q$ is bounded in ${C({\RR}^d)}$ and 
\begin{equation*}
\|Q(f)\|_{C({\RR}^d)} \le \|\Lambda \|\|f\|_{C({\RR}^d)}  
\end{equation*}
for each $f \in C({\RR}^d),$ where
\begin{equation*}
\|\Lambda \|= \ \sum_{j \in P^d(\mu)} | \lambda (j) |. 
\end{equation*}
Moreover, $Q$ is local in the following sense. There is 
a positive number $\delta > 0$ such that for  any  
$f \in C({\RR}^d)$ and $x \in {\RR}^d,$ $Q(f,x)$ 
depends only on the value $f(y)$ at a finite number of points 
$y$ with $|y_i - x_i|\le \delta, \ i=1,2,...d.$ 
We will require $Q$ to reproduce  the space 
$\Pp^d_{2r-1}$ of polynomials of order at most $2r - 1$ in each variable $x_i$, that is,  
\begin{equation*} 
Q(p) \ = \ p, \ p \in \Pp^d_{2r-1}. 
\end{equation*}
An operator $Q$ of the form \eqref{Q}--\eqref{Lambda} reproducing $\Pp^d_{2r-1}$, is called 
a {\it quasi-interpolant in} $C({\RR}^d).$ 

There are many ways to construct  quasi-interpolants.
 A method of construction via 
Neumann series was suggested by Chui and Diamond \cite{CD} 
(see also \cite[p. 100--109]{C}). De Bore and Fix \cite{BF} 
introduced another quasi-interpolant 
based on the values of derivatives. 
The reader can see also the books \cite{C}, \cite{BHR} 
for surveys  on quasi-interpolants.  
The most important cases of $d$-variate quasi-interpolants $Q$ are those where
the functional $\Lambda$ is the tensor product of such $d$ univariate functionals. 
Let us give some examples of univariate quasi-interpolants. 
The simplest example is 
a piecewise linear quasi-interpolant is defined for $r=1$ by
\begin{equation*} 
Q(f,x):= \ \sum_{s \in \ZZ} f(s) M(x-s), 
\end{equation*} 
where $M$ is the   
symmetric piecewise linear B-spline with support $[-1,1]$ and 
knots at the integer points $-1, 0, 1$.
This quasi-interpolant is also called {nodal} and directly related 
to the classical Faber-Schauder basis \cite{Di11}. 
Another example is the cubic quasi-interpolant defined for $r=2$ by
\begin{equation*} 
Q(f,x):= \ \sum_{s \in \ZZ} \frac {1}{6} \{- f(s-1) + 8f(s) - f(s+1)\} M(x-s), 
\end{equation*} 
where $M$ is the symmetric cubic B-spline with support $[-2,2]$ and 
knots at the integer points $-2, -1, 0, 1, 2$.

Let $\Omega = [a,b]^d$ be a $d$-cube in ${\RR}^d.$ Denote by  
$L_p(\Omega)$ the quasi-normed space 
of functions on $\Omega$ with the usual $p$th integral quasi-norm 
$\|\cdot\|_{p,\Omega}$ for $0 < p < \infty,$ and 
the normed space $C(\Omega)$ of continuous functions on $\Omega$ with 
the max-norm $\|\cdot\|_{\infty,\Omega}$ for $p = \infty$.
If $\tau$ be a number such that $0 < \tau \le \min (p,1),$ then for any sequence of 
functions $\{ f_k \}$ there is the inequality
\begin{equation} \label{ineq:IneqSumf_k}
\left\| \sum 
  f_k  \right\|_{p,\Omega}^{\tau} 
\  \le \ 
 \sum 
 \| f_k \|_{p,\Omega}^{\tau}.
\end{equation}

We introduce 
Besov spaces of smooth functions 
and give necessary knowledge of them. The reader can read this and more details
about Besov spaces in the books \cite{BIN}, \cite {N}, \cite{DL}.
Let
\begin{equation*}
\omega_l(f,t)_p:= \sup_{|h| < t}\|\Delta_h^l f\|_{p,{\II}^d(lh)}
\end{equation*} 
be the $l$th modulus of smoothness of $f$ where 
${\II}^d(lh):= \{ x: x, x + lh \in {\II}^d \} ,$  
and the $l$th difference $\Delta_h^lf$ is defined by 
\begin{equation*}
\Delta_h^lf(x) := \
\sum_{j =0}^l (-1)^{l - j} \binom{l}{j} f(x + jh).
\end{equation*}
For $0 <  p, \theta \le \infty$ 
and  $0 < \alpha < l,$ the Besov space 
$B_{p, \theta}^\alpha $ is the set of  functions $f \in L_p$ 
for which the Besov quasi-semi-norm 
$|f|_{B_{p, \theta}^\alpha}$ is finite. 
The Besov quasi-semi-norm $|f|_{B_{p, \theta}^\alpha }$ is given by
\begin{equation*} \label{BesovSeminorm}
|f|_{B_{p, \theta}^\alpha }:= 
\begin{cases}
 \ \left(\int_0^\infty \{ t^{-\alpha}
\omega_l(f,t)_p \}^ \theta dt/t \right)^{1/\theta}, 
& \theta < \infty, \\
  \sup_{t > 0} \ t^{-\alpha}\omega_l(f,t)_p,  & \theta = \infty.
\end{cases}
\end{equation*}
The  Besov quasi-norm is defined by
\begin{equation*} \label{BesovQuasinorm}
B(f) \ = \ \|f\|_{B_{p, \theta}^\alpha }
:= \ 
 \|f\|_p + |f|_{B_{p, \theta}^\alpha }. 
\end{equation*}

 If $\{f_k\}_{k=0}^\infty$ is a sequence 
whose component functions $f_k$ are in 
$L_p,$ for $0 < p, \theta \le \infty$ and $\beta \ge 0$ we use the 
$b_\theta^\beta(L_p)$ ``quasi-norms"
\begin{equation*}
\ \|\{ f_k\}\|_{b_\theta^\beta(L_p)}
\ := \ 
 \biggl(\sum_{k=0}^\infty 
 \{2^{\beta k}\| f_k \|_p\}^\theta \biggl)^{1/\theta} 
\end{equation*}
with the usual change to a supremum when $\theta = \infty.$ 
When $\{f_k\}_{k=0}^\infty$ is a positive sequence, we replace 
$\| f_k \|_p$ by $|f_k|$ and denote the corresponding quasi-norm by 
$\|\{ f_k\}\|_{b_\theta^\beta}.$
 
For the Besov space $B^\alpha_{p,\theta},$ there is the following
quasi-norm equivalence
\begin{equation*} 
B(f) 
\ \asymp \ B_1(f)
\ := \
\|\{\omega_l(f,2^{-k})_p\}\|_{b_\theta^\alpha} \ + \ \|f\|_p. 
\end{equation*}

In the present paper, we study the sampling recovery of functions from the Besov space $B^\alpha_{p,\theta}$
with some restriction on the smoothness $\alpha$. Namely, we assume that $\alpha > d/p$. This inequality
 provides the compact embedding of $B^\alpha_{p,\theta}$ into $C({\II}^d)$.
 In addition, we also consider the restriction
$\alpha = d/p$ and $\theta \le \min(1, p)$ which is a sufficient condition for  
the continuous embedding of $B^\alpha_{p,\theta}$ into $C({\II}^d)$.
In both these cases, $B^\alpha_{p,\theta}$ can be considered as a subset in $C(\II^d)$. 

If $Q$ of is a quasi-interpolant of the form 
\eqref{Q}--\eqref{Lambda}, for $h > 0$ and a function $f$ on ${\RR}^d$, 
we define the operator $Q(\cdot;h)$ by
\begin{equation*}
Q(f;h) = \ \sigma_h \circ Q \circ \sigma_{1/h}(f),
\end{equation*}
where $\sigma_h(f,x) = \ f(x/h)$.
By definition it is easy to see that 
\begin{equation*}
Q (f,x;h)= \ 
\sum_{k}\Lambda (f,k;h)M(h^{-1}x-k),
\end{equation*}
where 
\begin{equation*}
\Lambda (f,k;h):= 
 \ \sum_{j \in P^d(\mu)} \lambda (j) f(h(k-j)).
\end{equation*}
The operator $Q(\cdot;h)$ has the same properties as $Q$: it is a local bounded linear 
operator in ${\RR}^d$ and reproduces the polynomials from $\Pp_{2r-1}^d.$ 
Moreover, it gives a good approximation of smooth functions \cite[p. 63--65]{BHR}.
We will also call it a quasi-interpolant for $C({\RR}^d).$

The quasi-interpolant $Q(\cdot;h)$ is not defined for 
a function $f$ on ${\II}^d,$ and therefore, not appropriate for an 
approximate sampling recovery of $f$ from 
its sampled values at points in ${\II}^d.$ 
An approach to 
construct a quasi-interpolant for a function on ${\II}^d$ is to extend it 
by interpolation Lagrange polynomials. This approach has been proposed in 
\cite{Di6} for the univariate case. Let us recall it.

For a non-negative integer $m,$ 
we put $x_j = j2^{-m},  j \in \ZZ.$ If $f$ is a function on $\II,$ let  
$U_m(f)$ and $V_m$ 
be the $(2r - 1)$th Lagrange polynomials interpolating $f$
at the $2r$ left end points $x_0, x_1,..., x_{2r-1},$ and 
$2r$ right end points 
$x_{2^m - 2r + 1}, x_{2^m - 2r + 3},..., x_{2^m},$ 
of the interval $\II,$ respectively. 
The function $f_m$ is defined as an extension of 
$f$ on $\RR$ by the formula
\begin{equation*} 
f_m (x):= \
\begin{cases}
U_m(f,x), \ & x < 0, \\
f(x), \ & 0 \le x \le 1, \\
V_m(f,x), \ & x >1.
\end{cases}
\end{equation*} 
Obviously, if $f$ is continuous on $\II$, then $f_m$ is a continuous function on $\RR.$ 
Let $Q$ be a quasi-interpolant of the form \eqref{Q}-\eqref{Lambda} in $C({\RR}).$ 
We introduce the operator $Q_m$ by putting
\begin{equation*}
Q_m(f,x) := \ Q(f_m,x;2^{-m}), \  x \in \II, 
\end{equation*}
for a function $f$ on $\II$. By definition we have 
\begin{equation*} 
Q_m(f,x)  \ = \ 
\sum_{s \in J(m)} a_{m,s}(f)M_{m,s}(x), \ \forall x \in \II, 
\end{equation*}
where $J(m) := \ \{s \in \ZZ:\ -r < s <   2^m + r \}$ 
is the set of $s$ for which $M_{m,s}$ 
do not vanish identically on  $\II,$ and
\begin{equation*} 
a_{m,s}(f):= \ \Lambda(f_m,s;{2^{-m}}) 
= \   
\sum_{|j| \le \mu} \lambda (j) f_m(2^{-m}(s-j)).
\end{equation*}
The multivariate operator $Q_m$ is defined for functions $f$ on ${\II}^d$ by  
\begin{equation*} 
Q_m(f,x)  \ := \ 
\sum_{s \in J(m)} a_{m,s}(f)M_{m,s}(x), \quad \forall x \in {\II}^d, 
\end{equation*}
where $J^d(m):= \ \{s \in {\ZZ}^d:\ \ - r < s_i < 2^m + r, \ i=0,1,...,d\}$ 
is the set of $s$ for which $M_{m,s}$ 
do not vanish identically on  ${\II}^d,$ and
\begin{equation} \label{a_{m,s}D}
a_{m,s}(f) 
\ = \   
a_{m,s_1}((a_{m,s_2}(...a_{m,s_d}(f))),
\end{equation}
where the univariate functional
$a_{m,s_i}$ is applied to the univariate function $f$ by considering $f$ as a 
function of  variable $x_i$ with the other variables held fixed. 
Moreover, the number of the terms in $Q_m(f)$ is of the size 
$\approx 2^{dm}.$

The operator $Q_m$ is a local bounded linear 
mapping in $C({\II}^d)$  and reproducing $\Pp_{2r-1}^d.$   In particular,
\begin{equation} \label{ineq:Boundedness}
\|Q_m(f)\|_{C({\II}^d)}  \le C \|\Lambda \| \|f\|_{C({\II}^d)} 
\end{equation}
for each $f \in C({\II}^d),$ with a constant $C$ not depending on $m,$ 
and, 
\begin{equation} \label{eq:Reproducing^d}
Q_m(p^*) \ = \ p, \ p \in \Pp_{2r-1}^d, 
\end{equation}
where $p^*$ is the restriction of $p$ on ${\II}^d.$ 
The multivariate operator $Q_m$ is called 
{\it a quasi-interpolant in }$C({\II}^d).$ 

From \eqref{ineq:Boundedness} and \eqref{eq:Reproducing^d} we can see that
\begin{equation} \label{ConvergenceQ_m(f)}
\|f - Q_m(f)\|_{C({\II}^d)}  \to 0 , \ m  \to  \infty.
\end{equation}

Put ${\bf M}(m) := \{M_{m,s} \in {\bf M}: s \in J^d(m)\}$ and ${\bf V}(m) := \text{span} {\bf M}(m)$. 
If $0 < p \le \infty,$ 
for all non-negative integers $m$ and all functions
\begin{equation} \label{StabIneq.1}
g = \sum_{s \in J^d(m)} a_s M_{m,s}
\end{equation}
from ${\bf V}(m)$, there is the norm equivalence 
\begin{equation} \label{StabIneq.2}
 \|g\|_p 
\ \asymp \ 2^{- dm/p}\|\{a_s\}\|_{p,m},
\end{equation}
where
\begin{equation*}
\| \{a_s\} \|_{p,m}
 \ := \
\biggl(\sum_{s \in J^d(m)}| a_s |^p \biggl)^{1/p} 
\end{equation*}
with the corresponding change when $p= \infty.$

For non-negative integer $k$, let the operator $q_k$ be defined by  
\begin{equation*}
q_k(f):= Q_k(f)- Q_{k-1}(f) \quad \text{with} \quad Q_{-1} (f) :=0.
\end{equation*}
From \eqref{eq:Reproducing^d} and \eqref{ConvergenceQ_m(f)} it is easy to see that 
a continuous function  $f$ has the decomposition
\begin{equation*} 
f \ =  \ \sum_{k =0}^\infty q_k(f)
\end{equation*}
with the convergence in the norm of $C({\II}^d)$.  
By using the B-spline refinement equation, one can represent 
the component functions $q_k(f)$ as
 \begin{equation} \label{q_k(1)}
q_k(f) 
= \ \sum_{s \in J^d(k)}c_{k,s}(f)M_{k,s},
\end{equation}
where $c_{k,s}$ are certain coefficient functionals of 
$f,$ which are defined as follows. For the univariate case, we put
\begin{equation}\label{def:c_{k,s}}
c_{k,s}(f)
\ := \
a_{k,s}(f) - a^\prime_{k,s}(f), \ k > 0,
\end{equation}
\begin{equation*}
a^\prime_{k,s}(f):= \ 2^{-2r+1} \sum_{(m,j) \in C(k,s)}
 \binom{2r}{j} a_{k-1,m}(f),  \ k > 0, \ \ 
a^\prime_{0,s}(f):= 0.
\end{equation*}
 and 
\begin{equation*} 
C(k,s) := \{(m,j): 2m + j - r = s, \ m \in J(k-1), \  0 \le j \le 2r\}, \ k > 0, \ \
C(0,s) := \{0\}.
\end{equation*} 
For the multivariate case, we define $c_{k,s}$ in the manner of the definition  
\eqref{a_{m,s}D} 
by
\begin{equation} \label{def:c_{k,s}M}
c_{k,s}(f) 
\ := \   
c_{k,s_1}((c_{k,s_2}(... c_{k,s_d}(f))).
\end{equation}

For functions $f$ on ${\II}^d$, we introduce the quasi-norms:
\begin{equation*} 
\begin{aligned}
B_2(f)
 \ & := \ 
\| \{ q_k(f) \}\|_{b_\theta^\alpha(L_p)}; \\
B_3(f)
\ & := \ 
\biggl(\sum_{k=0}^\infty 
\bigl( 2^{(\alpha - d/p)k}\|\{c_{k,s}(f)\}\|_{p,k} \bigl)^\theta \biggl)^{1/\theta}.
\end{aligned}
\end{equation*}
The following theorem has been proven in \cite{Di7}.

\begin{theorem} \label{Theorem:Representation}
Let $\ 0 < p, \theta \le \infty$ and $d/p < \alpha < 2r$. 
Then the hold the following assertions. 
\begin{itemize}
\item[(i)] A function $f \in B_{p, \theta}^\alpha$ can be represented by the mixed B-spline series 
\begin{equation} \label{eq:B-splineRepresentation}
f \ = \sum_{k = 0}^\infty \ q_k(f) = 
\sum_{k = 0}^\infty \sum_{s \in J^d(k)} c_{k,s}(f)M_{k,s}, 
\end{equation}  
satisfying 
the convergence condition
\begin{equation*} 
B_2(f) \ \asymp \ B_3(f) \ \ll \ B(f),
\end{equation*}
where the coefficient functionals $c_{k,s}(f)$ are explicitly constructed by formula 
\eqref{def:c_{k,s}}--\eqref{def:c_{k,s}M} as
linear combinations of at most $N$ function
values of $f$ for some $N \in \NN$ which is independent of $k,s$ and $f$.
 
\item[(ii)]  If in addition, $ \alpha < \min(2r, 2r - 1 + 1/p)$,
then a continuous function $f$ on ${\II}^d$ belongs to 
the Besov space $B_{p, \theta}^\alpha$ if and only if 
$f$ can be represented by the series \eqref{eq:B-splineRepresentation}.
Moreover, the Besov quasi-norm $B(f)$ is equivalent to one of the quasi-norms $B_2(f)$ and $B_3(f)$.
\end{itemize} 
\end{theorem}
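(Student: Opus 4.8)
The plan is to rest everything on two facts established earlier: that $Q_m$ is a local operator reproducing $\Pp_{2r-1}^d$ (see \eqref{eq:Reproducing^d}), which yields near-best local polynomial approximation, and the stability relation \eqref{StabIneq.2} linking the $L_p$ quasi-norm of a spline in ${\bf V}(k)$ to the $\|\cdot\|_{p,k}$ quasi-norm of its coefficients. I would first dispose of the representation and of $B_2(f) \asymp B_3(f)$, which are the soft parts. Because $\alpha > d/p$ forces $f \in C(\II^d)$, the telescoping identity $f = \sum_{k=0}^\infty q_k(f)$ converges in $C(\II^d)$ by \eqref{eq:Reproducing^d} and \eqref{ConvergenceQ_m(f)}; expanding $Q_k(f)$ in ${\bf M}(k)$ and folding the B-spline refinement equation into $Q_k(f) - Q_{k-1}(f)$ produces exactly the coefficients $c_{k,s}(f)$ of \eqref{def:c_{k,s}}--\eqref{def:c_{k,s}M}, giving \eqref{eq:B-splineRepresentation}. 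That each $c_{k,s}(f)$ is a fixed finite linear combination of at most $N$ point values of $f$ follows by unwinding the definitions: $a_{k,s}(f)$ samples the extension $f_m$ at the $|P^d(\mu)|$ shifted points, and each Lagrange-extension value of $U_m,V_m$ is itself a finite combination of boundary samples of $f$, the total count being bounded uniformly in $k,s,f$. Applying \eqref{StabIneq.2} to $g = q_k(f)$ gives $\|q_k(f)\|_p \asymp 2^{-dk/p}\|\{c_{k,s}(f)\}\|_{p,k}$; multiplying by $2^{\alpha k}$ and taking the $b_\theta$ quasi-norm converts $B_2(f)$ into $B_3(f)$ term by term, so $B_2(f) \asymp B_3(f)$ throughout $d/p < \alpha < 2r$.

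Next I would establish the direct estimate $B_2(f) \ll B(f)$. Its engine is a Jackson-type bound $\|f - Q_m(f)\|_p \ll \omega_{2r}(f,2^{-m})_p$: on each dyadic cube of side $2^{-m}$ the locality and polynomial reproduction of $Q_m$ make $f - Q_m(f)$ comparable to the local error of best approximation by $\Pp_{2r-1}^d$, which by Whitney's inequality is majorized by the local $2r$th modulus; summing the $p$th powers over the cubes (using \eqref{ineq:IneqSumf_k} when $p < 1$) gives the global bound. Then $\|q_k(f)\|_p \le \|f - Q_k(f)\|_p + \|f - Q_{k-1}(f)\|_p \ll \omega_{2r}(f,2^{-k})_p$, and since $\alpha < 2r$ I may take $l = 2r$ in the Besov seminorm, so forming the $b_\theta^\alpha$ quasi-norm and invoking $B(f) \asymp B_1(f)$ gives $B_2(f) \ll \|\{\omega_{2r}(f,2^{-k})_p\}\|_{b_\theta^\alpha} \ll B(f)$.

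The main obstacle in this step is the boundary: $Q_m(f)$ is assembled from the Lagrange extension $f_m$, so for $x$ near $\partial \II^d$ the Jackson estimate must absorb the interpolation error of $U_m,V_m$. I would handle this by a localization argument — approximate $f$ locally by $p \in \Pp_{2r-1}^d$, note that $Q_m$ as well as $U_m,V_m$ all reproduce such $p$, and bound the Lagrange extension of the remainder $f-p$ on the boundary strip of width $\approx 2^{-m}$ by the local modulus of $f$ on $\II^d$ itself. Obtaining this uniform (in $m$) control of the extension operators is the technically delicate point, since a priori Lagrange interpolation at the end nodes could amplify the remainder.

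Finally, for part (ii) I would prove the inverse estimate $B(f) \ll B_2(f)$ under the added hypothesis $\alpha < \min(2r, 2r-1+1/p)$. The tool is a Bernstein-type inequality for the spline spaces: for $g \in {\bf V}(k)$ and $0 < t \le 2^{-k}$ one has $\omega_{2r}(g,t)_p \ll (2^k t)^\beta\|g\|_p$ with the spline Bernstein exponent $\beta := \min(2r, 2r-1+1/p)$, alongside the trivial bound $\omega_{2r}(g,t)_p \ll \|g\|_p$. Writing $f = \sum_k q_k(f)$ and using \eqref{ineq:IneqSumf_k}, I split $\omega_{2r}(f,2^{-m})_p$ into a low-frequency part $\ll \sum_{k \le m}(2^{k-m})^\beta\|q_k(f)\|_p$ and a high-frequency part $\ll \sum_{k > m}\|q_k(f)\|_p$. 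Multiplying by $2^{\alpha m}$, taking the $b_\theta$ quasi-norm in $m$, and applying a discrete Hardy-type inequality bounds both double sums by $\|\{2^{\alpha k}\|q_k(f)\|_p\}\|_{b_\theta} = B_2(f)$; the low-frequency series converges precisely because $\alpha < \beta$ and the high-frequency one because $\alpha > 0$, which is exactly where the restriction $\alpha < \min(2r,2r-1+1/p)$ is needed. Adding $\|f\|_p \ll B_2(f)$ yields $B(f) \ll B_2(f)$, and combined with the direct estimate and $B_2 \asymp B_3$ this gives the full equivalence $B(f) \asymp B_2(f) \asymp B_3(f)$ asserted in (ii).
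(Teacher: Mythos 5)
Your overall architecture (locality plus reproduction of $\Pp_{2r-1}^d$, Whitney estimates, the stability relation \eqref{StabIneq.2} to get $B_2 \asymp B_3$, and a spline Bernstein inequality with exponent $\min(2r,2r-1+1/p)$ combined with discrete Hardy inequalities for part (ii)) is the right one, but the engine of your direct estimate is a genuinely false inequality: the pure $L_p$ Jackson bound $\|f-Q_m(f)\|_p \ll \omega_{2r}(f,2^{-m})_p$ fails for every $p<\infty$ for any quasi-interpolant built from point values. Take $f$ a smooth bump of height $1$ and width $\varepsilon \ll 2^{-m}$ centered at an interior grid point $2^{-m}s_0$. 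Then $\omega_{2r}(f,2^{-m})_p \ll \|f\|_p \asymp \varepsilon^{d/p}$, while reproduction of constants forces $\sum_j \lambda(j)=1$, so $Q_m(f)=\sum_s \lambda(s-s_0)M_{m,s}$ has $\|Q_m(f)\|_p \asymp 2^{-dm/p}$ and hence $\|f-Q_m(f)\|_p \gg 2^{-dm/p}$; the ratio blows up as $\varepsilon \to 0$, so no uniform constant exists. The error in your localization argument is that $Q_m(f-P)$ on a cube $I$ is controlled by the \emph{sup} norm of $f-P$ on the enlarged cube $I^*$ (point evaluations see nothing else), so Whitney yields local errors of size $2^{-dm/p}\,\omega_{2r}(f,2^{-m})_{\infty,I^*}$, and the resulting discretized sup-modulus $\bigl(2^{-dm}\sum_{s}\omega_{2r}(f,2^{-m})_{\infty,I^*_{m,s}}^p\bigr)^{1/p}$ is \emph{not} majorized by $\omega_{2r}(f,2^{-m})_p$. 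A structural symptom of the gap is that your direct-estimate step uses only continuity of $f$ and never the hypothesis $\alpha>d/p$; but $B_2(f)\ll B(f)$ genuinely requires $\alpha>d/p$, precisely to absorb the loss of a factor $2^{dk/p}$ incurred in passing from local sup-moduli back to $L_p$-moduli.

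For the comparison you asked for: the paper itself contains no proof of Theorem \ref{Theorem:Representation} --- it is quoted from \cite{Di7} (with the univariate construction from \cite{Di6}) --- and the argument there coincides with your plan everywhere except at this step. There, the coefficients $c_{k,s}(f)$ (equivalently the local errors of $Q_k$) are bounded by local sup-norm distances of $f$ to polynomials of coordinate order $2r$, and one then estimates the discretized sup-modulus by $L_p$-moduli at \emph{finer} scales, paying the factor $2^{(j-k)d/p}$ and summing with a Hardy-type inequality that converges exactly because $\alpha - d/p > 0$; this is where the hypothesis $\alpha > d/p$ enters the direct estimate. Your treatment of the boundary (Lagrange extensions $U_m, V_m$ reproducing $\Pp^d_{2r-1}$, with the remainder on the boundary strip controlled by local moduli, uniformly in $m$) and your part (ii) via the spline Bernstein inequality $\omega_{2r}(g,t)_p \ll (2^kt)^{\min(2r,2r-1+1/p)}\|g\|_p$ for $g \in {\bf V}(k)$ are consistent with the source's proof, so the needed repair is local: replace the false $L_p$ Jackson inequality by the two-step local-sup-modulus plus Hardy argument.
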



\section{Adaptive continuous sampling recovery} 
\label{C_recovery}

In this section, we construct asymptotically optimal algorithms which give the upper bound 
of $\nu_n(B^\alpha_{p,\theta},L_q)$ in Theorem \ref{ThNU}. We need some auxiliary lemmas.

\begin{lemma} \label{Inequality[|f - Q_k(f)|_q]}
Let $ p,q , \theta, \alpha $  satisfy Condition \eqref{Condition} and $\alpha < 2r$. 
Then $Q_m \in \Cc(B_{p,\theta}^{\alpha}, L_q)$ and  for any $f \in B_{p,\theta}^{\alpha}$, we have 
\begin{equation*} 
\|Q_m(f)\|_q 
\ \le \ 
C \|f\|_{B_{p,\theta}^{\alpha}},
\end{equation*}
\begin{equation} \label{ineq:[|f - Q_m(f)|_q]}
\|f - Q_m(f)\|_q 
\ \le \ 
C' 2^{- (\alpha - d(1/p - 1/q)_+ )m} \|f\|_{B_{p,\theta}^{\alpha}}
\end{equation}
with some constants $C, C'$ depending at most on $d, r, p, q , \theta$ and $\|\Lambda\|$.
\end{lemma}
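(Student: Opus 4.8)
The plan is to reduce everything to the B-spline series representation of Theorem \ref{Theorem:Representation} and then estimate block by block. First I would observe that $Q_m$ is a finite-rank \emph{linear} operator into $\mathbf{V}(m)$ whose coefficients $a_{m,s}(f)$ are, by construction, finite linear combinations of point values of the Lagrange-extended function $f_m$; hence once the bound $\|Q_m(f)\|_q \le C\|f\|_{B^\alpha_{p,\theta}}$ is established, the continuity $Q_m \in \Cc(B^\alpha_{p,\theta}, L_q)$ is automatic, because a linear operator between quasi-normed spaces is continuous precisely when it is bounded (applying the bound to $f-g$ gives Lipschitz continuity). So the entire lemma rests on the two quantitative estimates.

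For those I would use the telescoping identities $Q_m(f) = \sum_{k=0}^m q_k(f)$ and, via the representation \eqref{eq:B-splineRepresentation}, $f - Q_m(f) = \sum_{k=m+1}^\infty q_k(f)$, where each block $q_k(f) = \sum_{s \in J^d(k)} c_{k,s}(f) M_{k,s}$ lies in $\mathbf{V}(k)$. The key is a Nikol'skii-type estimate comparing $\|q_k(f)\|_q$ with the $\ell_p$-norm of its coefficients. Applying the stability equivalence \eqref{StabIneq.2} with exponent $q$ gives $\|q_k(f)\|_q \asymp 2^{-dk/q}\|\{c_{k,s}(f)\}\|_{q,k}$, and I would then pass from the $\ell_q$ to the $\ell_p$ norm of the coefficient sequence using that $\mathbf{V}(k)$ has $\asymp 2^{dk}$ basis functions: for $p \le q$ the monotone embedding $\|\{c\}\|_{q,k} \le \|\{c\}\|_{p,k}$, and for $p > q$ Hölder's inequality $\|\{c\}\|_{q,k} \le C\, 2^{dk(1/q-1/p)}\|\{c\}\|_{p,k}$. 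Setting $b_k := 2^{(\alpha-d/p)k}\|\{c_{k,s}(f)\}\|_{p,k}$ and $\beta := \alpha - d(1/p-1/q)_+$, both cases collapse to the single bound $\|q_k(f)\|_q \le C\, 2^{-\beta k} b_k$, with $\beta > 0$ by Condition \eqref{Condition}.

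The final step is the summation in $k$. Using the quasi-norm inequality \eqref{ineq:IneqSumf_k} with $\tau := \min(q,1)$ I would write $\|f - Q_m(f)\|_q^\tau \le C\sum_{k > m} 2^{-\beta\tau k} b_k^\tau$ and bound the right-hand side by $C\, 2^{-\beta m} B_3(f)$. When $\theta \le \tau$ this is the embedding $\ell_\theta \hookrightarrow \ell_\tau$ after factoring out $2^{-\beta m}$ and using $2^{-\beta(k-m)} \le 1$; when $\theta > \tau$ (including $\theta = \infty$) it is Hölder's inequality with exponents $\theta/\tau$ and $\theta/(\theta-\tau)$, the conjugate factor being a convergent geometric series whose ratio is governed by $\beta > 0$ and which sums to $\asymp 2^{-\beta\tau m}$. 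Since $B_3(f) \asymp B(f)$ by Theorem \ref{Theorem:Representation}, this yields the error bound \eqref{ineq:[|f - Q_m(f)|_q]}. The boundedness $\|Q_m(f)\|_q \le C\|f\|_{B^\alpha_{p,\theta}}$ then follows from the same summation taken over $0 \le k \le m$ (equivalently over all $k$), or simply from $\|Q_m(f)\|_q \le \|f\|_q + \|f - Q_m(f)\|_q$.

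I expect the main obstacle to be the uniform handling of the summation across the various regimes of $p,q,\theta$ — in particular keeping track of which factor absorbs the $2^{dk}$ cardinality of $\mathbf{V}(k)$, and checking that $\beta > 0$ in the limiting case (ii) of Condition \eqref{Condition}, where $\alpha = d/p$ forces $\beta$ to equal $d/q$ or $d/p$ — rather than any individual inequality, each of which is routine.
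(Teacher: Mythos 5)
Your argument for case (i) of Condition \eqref{Condition} is correct and coincides with the paper's own proof: the same block decomposition $f - Q_m(f) = \sum_{k>m} q_k(f)$, the same Nikol'skii-type estimate (the paper's \eqref{ineq:Nikolskii}, derived exactly as you do from the stability equivalence \eqref{StabIneq.2} at the two exponents $p$ and $q$), and the same two-regime summation --- embedding of $\ell_\theta$ when $\theta \le \min(q,1)$, H\"older against a convergent geometric series when $\theta > \min(q,1)$. Working with the coefficient quasi-norm $B_3$ rather than the block quasi-norm $B_2$ is immaterial, since Theorem \ref{Theorem:Representation} gives $B_2 \asymp B_3$; your continuity-from-boundedness remark for the linear operator $Q_m$ and the bound for $\|Q_m(f)\|_q$ by summing over $k \le m$ also match the paper.

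The genuine gap is case (ii) of Condition \eqref{Condition}. Your entire proof rests on Theorem \ref{Theorem:Representation}, but that theorem is stated only for $d/p < \alpha < 2r$: at the borderline $\alpha = d/p$, $\theta \le \min(1,p)$, neither the representation $f = \sum_k q_k(f)$ with the quantitative bound $B_3(f) \ll B(f)$ nor $B_2(f) \ll B(f)$ is available from anything stated in this paper, and these bounds in the limiting case are genuinely nontrivial (they encode the continuous, no longer compact, embedding of $B^{d/p}_{p,\theta}$ into $C(\II^d)$). Your closing remark about ``checking that $\beta > 0$ in the limiting case (ii)'' addresses only the exponent arithmetic --- $\beta = d/q$ for $p \le q$ and $\beta = d/p$ for $p > q$, which is indeed positive --- not the availability of the representation itself, which is where the real work lies. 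Note that the paper does not close this hole by your route either: its proof runs the block-by-block argument only under (i) and disposes of case (ii) by citation to \cite{Di7}. Two smaller inaccuracies: you invoke ``$B_3(f) \asymp B(f)$'', whereas Theorem \ref{Theorem:Representation}(i) gives only $B_3(f) \ll B(f)$ (the reverse inequality requires the extra restriction $\alpha < \min(2r, 2r-1+1/p)$ of part (ii)) --- fortunately only the $\ll$ direction is used; and the step $\|Q_m(f)\|_q \le \|f\|_q + \|f - Q_m(f)\|_q$ needs the quasi-triangle constant of $L_q$ together with the embedding $B^\alpha_{p,\theta} \subset L_q$, both harmless under Condition \eqref{Condition} but worth stating.
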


\begin{proof} We first prove \eqref{ineq:[|f - Q_m(f)|_q]}.
The case when the Condition (ii) holds has been proven in \cite{Di7}. Let us prove 
the case when the Condition (i) takes place. 
We put $\alpha' := \alpha - d(1/p - 1/q)_+  > 0$. 
For an arbitrary $f \in B^\alpha_{p,\theta}$, by the representation \eqref{eq:B-splineRepresentation}
and \eqref{ineq:IneqSumf_k} we have
\begin{equation} \label{ineq:[|f - Q_m(f)|_q^tau]}
\|f - Q_m(f)\|_q^{\tau} 
\  \ll \ 
 \sum_{k > m} \| q_k(f)\|_q^{\tau}
\end{equation}
with any $\tau \le \min(q,1)$. From \eqref{q_k(1)} and \eqref{StabIneq.1}--\eqref{StabIneq.2}
we derive that
\begin{equation} \label{ineq:Nikolskii}
\|q_k(f)\|_q 
\  \ll \ 
 2^{(1/p - 1/q)_+k} \| q_k(f)\|_p
\end{equation}
Therefore, if $\theta \le \min(q,1)$, then
 we get 
\begin{equation*} 
\begin{aligned}
\|f - Q_m(f)\|_q  
\  & \ll \ 
 \left(\sum_{k > m} \| q_k(f)\|_q^{\theta} \right)^{1/\theta} \\
\  & \le \ 
 \left(\sum_{k > m} \{2^{(1/p - 1/q)_+k} \| q_k(f)\|_p\}^{\theta} \right)^{1/\theta} \\
\  & \le \ 
2^{- \alpha' m} \left(\sum_{k > m} \{2^{\alpha k}\| q_k(f)\|_p\}^{\theta} \right)^{1/\theta} \\
 \  & \ll \ 
2^{- \alpha' m} B_2(f)
 \ \ll \ 
2^{- \alpha' m} \|f\|_{B_{p,\theta}^{\alpha}}.
\end{aligned}
\end{equation*}
Further, if $\theta > \min(q,1)$, then from \eqref{ineq:[|f - Q_m(f)|_q^tau]} 
and \eqref{ineq:Nikolskii} it follows that
\begin{equation*}
\begin{aligned} 
\|f - Q_m(f)\|_q^{q^*}
\ & \ll \ 
\sum_{k > m} \| q_k(f)\|_q^{q^*} \\
 \  & \ll \ 
 \sum_{k > m} \{2^{\alpha k} \|q_k(f)\|_q \}^{q^*}\{2^{- \alpha' k}\}^{q^*},
\end{aligned}
\end{equation*}
where $q^* = \min(q,1)$. Putting $\nu := \theta/q^*$ and $\nu':= \nu/(\nu - 1)$, 
by H\"older's inequality obtain
\begin{equation*} 
\begin{aligned}
\|f - Q_m(f)\|_q^{q^*}
\ & \ll \ 
\left( \sum_{k > m} \{2^{\alpha k}\| q_k(f)\|_q\}^{q^* \nu}\right)^{1/\nu} 
\left(\sum_{k > m}\{2^{- \alpha' k}\}^{q^* \nu'}\right)^{1/\nu'} \\
\ & \ll \ 
\{B_2(f)\}^{q^*} \{2^{- \alpha' m}\}^{q^*}  
\  \ll \ 
\{2^{- \alpha' m}\}^{q^*}\|f\|_{B_{p,\theta}^{\alpha}}^{q^*}. 
\end{aligned}
\end{equation*}
Thus, the inequality \eqref{ineq:[|f - Q_m(f)|_q]} is completely proven.

Next, by use of the inequality
\begin{equation*}
\|Q_m(f)\|_q^{\tau} 
\  \ll \ 
 \sum_{k \le m} \| q_k(f)\|_q^{\tau}
\end{equation*}
with any $\tau \le \min(q,1)$, in a similar way we can prove that 
$\|Q_m(f)\|_q \ \ll \  \|f\|_{B_{p,\theta}^{\alpha}}$
and therefore, the inclusion $Q_m \in \Cc(B_{p,\theta}^{\alpha}, L_q)$.
\end{proof}

\begin{lemma} \label{Lemma:Q_m=L_n}
 For functions $f$ on ${\II}^d$, $Q_k$ defines a linear $n$-sampling algorithm 
of the form \eqref{L(f)}. More precisely, 
\begin{equation*} 
Q_k(f) 
\ = \ 
L_n(f) 
\ = \ 
\sum_{s \in I^d(k)} f(2^{-k}j) \psi_{k,j}, 
\end{equation*} 
where $n \ := (2^k + 1)^d$, \   
$\psi_{k,j}$ are explicitly constructed as linear combinations of at most 
$(2\mu + 2)^d$ B-splines $M_{k,s}$, and $I^d(k):=\{s \in \ZZ^d_+: 0 \le s_i \le 2^k, \ i =1,...,d\}$.
\end{lemma}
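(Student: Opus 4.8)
The plan is to unfold the definition of the quasi-interpolant and re-sum it by sample points. Starting from $Q_k(f,x)=\sum_{s\in J^d(k)}a_{k,s}(f)M_{k,s}(x)$, I would first record that, by \eqref{a_{m,s}D} and the univariate formula for $a_{k,s}$, each coefficient $a_{k,s}(f)$ is a fixed (tensor-product) linear combination of the extended values $f_k(2^{-k}(s-j))$ with $|j_i|\le\mu$. The one genuinely new ingredient is the treatment of those arguments $2^{-k}(s-j)$ that fall outside $\II^d$: along each coordinate for which the argument is $<0$ or $>1$, the extension $f_k$ is given by the fixed-degree Lagrange polynomials $U_k$ or $V_k$, and these express $f_k$ at such a point as a linear combination of the genuine samples of $f$ at the $2r$ left endpoints $x_0,\dots,x_{2r-1}$ or the $2r$ right endpoints. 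Thus every $a_{k,s}(f)$ becomes a finite linear combination of the true sample values $f(2^{-k}j)$ with $j\in I^d(k)$.

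Next I would substitute these expansions into $Q_k(f)=\sum_s a_{k,s}(f)M_{k,s}$ and interchange the two finite sums, collecting, for each grid point $2^{-k}j$ with $j\in I^d(k)$, all B-splines $M_{k,s}$ whose coefficient contains $f(2^{-k}j)$. This produces exactly $Q_k(f)=\sum_{j\in I^d(k)} f(2^{-k}j)\,\psi_{k,j}$ with $\psi_{k,j}=\sum_s \gamma_{s,j}M_{k,s}$, where the scalars $\gamma_{s,j}$ are explicit combinations of the weights $\lambda(\cdot)$ and of the Lagrange basis values. Since $|I^d(k)|=(2^k+1)^d$, this already gives the asserted form \eqref{L(f)} and the count $n=(2^k+1)^d$; it then remains only to bound, uniformly in $k$, how many B-splines enter a single $\psi_{k,j}$.

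For that last step I would argue by locality. Because $\Lambda$ is supported on $P^d(\mu)$, a fixed sample $f(2^{-k}j)$ can enter a coefficient $a_{k,s}(f)$ through the interior rule only when $s-j\in P^d(\mu)$, i.e.\ for $s$ in a box of side $2\mu+1$, which gives the $(2\mu+1)^d$ interior contribution. The remaining contributions come only through the Lagrange extension, and the key observation is that the extension stencil has a fixed size $2r$ per face, independent of $k$, so the extra B-splines it activates lie in a bounded, $k$-independent collar of boundary indices. Assembling the interior box with this fixed boundary correction, and exploiting the tensor-product structure so that the univariate counts simply multiply, yields a uniform bound of the order $(2\mu+2)^d$ on the number of B-splines per $\psi_{k,j}$.

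The main obstacle is precisely this boundary bookkeeping: for samples near a face or corner of $\II^d$ the direct stencil and the Lagrange-extension stencil overlap and interact across coordinates, so the delicate point is to verify that their union never exceeds the stated uniform bound and, above all, that the count does not grow with $k$. By contrast, the interior estimate and the algebraic re-summation are routine once the extension values $f_k$ have been written out explicitly through $U_k$ and $V_k$.
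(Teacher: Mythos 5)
Your proposal is correct and follows essentially the same route as the paper's proof: you rewrite each coefficient $a_{k,s}(f)$ as a linear combination of genuine samples $f(2^{-k}j)$, $j\in I^d(k)$, by expanding the Lagrange extensions $U_k,V_k$ at exterior stencil points, interchange the two finite sums to collect the functions $\psi_{k,j}$, and bound the univariate stencil by $\max(2r,2\mu+1)\le 2\mu+2$ before tensorizing. The boundary bookkeeping you flag as the delicate point is resolved in the paper exactly as you indicate, coordinate by coordinate through the tensor-product structure of the functionals \eqref{a_{m,s}D}, so the univariate count simply raises to the $d$th power and never grows with $k$.
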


\begin{proof}
 For univariate functions the coefficient functionals 
$a_{k,s}(f)$ can be rewritten as 
\begin{equation*} 
a_{k,s}(f)
\  = \   
\sum_{|s - j| \le \mu} \lambda (s - j) f_k(2^{-k}j)
\  = \   
\sum_{j \in P(k,s)} \lambda_{k,s} (j) f(2^{-k}j),
\end{equation*}
where $\lambda_{k,s} (j) := \lambda(s - j)$ and
$P(k,s) = P_s(\mu) := \{j \in \{0,2^k\}: s-j \in P(\mu)\}$ for $\mu \le s \le 2^k - \mu$;  
$\lambda_{k,s} (j)$ is  a linear combination of no more than
$\max (2r, 2\mu +1)\le 2\mu + 2$ coefficients $\lambda (j), j \in P(\mu)$, for
$s < \mu$ or $s > 2^k - \mu$ and
\begin{equation*}
P(k,s)
\ \subset \
\begin{cases} 
P_s(\mu) \cup \{0,2r - 1\}, \ & s < \mu, \\
P_s(\mu)\cup \{2^k - 2r + 1, 2^k\}, \ & s > 2^k - \mu.
\end{cases}
\end{equation*}
If $j \in P(k,s)$, we have $|j - s|\le \max(2r, \mu) \le 2\mu + 2$. 
Therefore, 
$P(k,s) \subset P_s({\bar \mu})$, and we can rewrite the coefficient functionals $a_{k,s}(f)$
in the form 
\begin{equation*} 
a_{k,s}(f)
\  = \   
\sum_{j - s \in P(2\mu + 2)} \lambda_{k,s} (j) f(2^{-k}j)
\end{equation*}
with zero coefficients $\lambda_{k,s} (j)$ for $j \notin P(k,s)$. 
Therefore, for any $k \in {\ZZ}_+$, we have  
\begin{equation*}
\begin{aligned} 
Q_k(f)
\ & = \
\sum_{s \in J(k)} a_{k,s}(f) M^r_{k,s} 
\  = \   
\sum_{s \in J_r(k)} \sum_{j - s \in P(2\mu + 2)} \lambda_{k,s} (j) f(2^{-k}j) M^r_{k,s} \\
\ & = \   
\sum_{j \in I(k)} f(2^{-k}j) \sum_{s-j \in P(2\mu + 2)} \gamma_{k,j}(s)  M^r_{k,s}
\end{aligned}
\end{equation*}
for certain coefficients $\gamma_{k,j}(s)$. Thus,  the univariate $q_k(f)$ is of the form
\begin{equation*} 
Q_k(f)
\ = \
\sum_{j \in I(k)} f(2^{-k}j) \psi_{k,j},
\end{equation*}
where
\begin{equation*}
 \psi_{k,j}
\ := \
\sum_{s-j \in P(2\mu + 2)} \gamma_{k,j} (s)  M_{k,s},
\end{equation*}
are a linear combination of no more than the absolute number $2\mu + 2$ of B-splines $ M_{k,s}$, 
and the size $|I(k)|$ is $2^k$.
Hence, the multivariate $q_k(f)$ is of the form
\begin{equation*}
Q_k(f)
\ = \
\sum_{j \in I^d(k)} f(2^{-k}j) \psi_{k,j},
\end{equation*}
where
\begin{equation*}
 \psi_{k,j}
\ := \
\prod_{i=1}^d  \psi_{k,j_i}
\end{equation*}
are a linear combination of no more than the absolute number 
$(2\mu + 2)^d$ of B-splines $ M_{k,s}$, 
and the size $|I^d(k)|$ is $2^{dk}$. 
\end{proof}

For $0 < p \le \infty$, denote by $\ell_p^m$ the space of all sequences
$x=\{x_k\}_{k=1}^m$ of numbers, equipped with the quasi-norm
\begin{equation*}
\|x\|_{\ell^m_p} 
\ := \ \left( \sum_{k=1}^m |x_k|^p\right)^{1/p}
\end{equation*}
with the change to the $\max$ norm when $p=\infty$. Denote by 
$B^m_p$ the unit ball in $\ell^m_p$. Let 
${\mathcal E} = \{e_k\}_{k=1}^m $ be the canonical basis in $\ell_q^m$, i. e., 
$x=\sum_{k=1}^m x_k e_k.$

For $x=\{x_k\}_{k=1}^m \in \ell_q^m$, we let the set 
$\{k_j\}_{j=1}^m$ be ordered  so that
\begin{equation*}
|x_{j_1}| \ge |x_{j_2}| \ge \cdots |x_{j_s}| \ge \cdots \ge |x_{j_m}|. 
\end{equation*}

We define the algorithm  $P_n$ for the $n$-term approximation with regard to 
the basis ${\mathcal E}$ in the space 
$\ell^m_q \  (n\le m)$ as follows. For $x=\{x_k\}_{k=1}^m \in \ell_p^m$, we let the set 
$\{k_j\}_{j=1}^m$ be ordered  so that
\begin{equation*}
|x_{j_1}| \ge |x_{j_2}| \ge \cdots |x_{j_s}| \ge \cdots \ge |x_{j_m}|. 
\end{equation*}
Then, for $n < m$ we define
\begin{equation*}
P_n(x) :=\sum_{j=1}^n (x_{k_j} - |x_{n+1}| \, \text{sign} \, x_{k_j})e_{k_j}.
\end{equation*}
For a proof of the following lemma see \cite{Di4}.

\begin{lemma} \label{C_Algorithm}
 The operator $P_n \in \Cc(\ell_p^m, l_q^m)$ for $0 < p, q \le \infty$. 
If $0 < p < q \le \infty$, then we have for any positive integer $n<m$
\begin{equation*}
\sup_{x \in B_p^m} \|x - P_n(x)\|_{\ell_q^m}
 \  \le \
n^{1/q -1/p}.
\end{equation*}
\end{lemma}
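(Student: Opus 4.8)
The plan is to first recognize $P_n$ as a soft-thresholding operator and then treat the two assertions separately: the continuity is essentially a consequence of this reformulation, while the error estimate rests on a cancellation that the thresholding is designed to produce. Writing $t = t(x) := |x_{k_{n+1}}|$ for the $(n+1)$-th largest of the numbers $|x_1|,\dots,|x_m|$, one checks directly from the definition that
\[
P_n(x)_k \ = \ \sgn(x_k)\,(|x_k| - t)_+ \qquad (k=1,\dots,m),
\]
since for the $n$ retained indices $|x_{k_j}| \ge t$, whereas for the discarded ones $(|x_{k_j}|-t)_+ = 0$.

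For the continuity (valid for all $0 < p,q \le \infty$), I would note that on the finite-dimensional space $\RR^m$ all the quasi-norms $\|\cdot\|_{\ell_p^m}$ induce the same topology, so it suffices to prove that $P_n \colon \RR^m \to \RR^m$ is continuous coordinatewise in the Euclidean sense. The map $x \mapsto t(x)$ is continuous because an order statistic of the continuous functions $|x_1|,\dots,|x_m|$ is continuous, and the scalar soft-threshold $(s,\tau) \mapsto \sgn(s)(|s|-\tau)_+$ is jointly continuous on $\RR \times [0,\infty)$; the only possible trouble spot, $s=0$, is harmless because $(|s|-\tau)_+$ vanishes there for $\tau \ge 0$. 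Composing these gives continuity of each coordinate $x \mapsto \sgn(x_k)(|x_k|-t(x))_+$, hence of $P_n$.

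For the error bound with $0 < p < q \le \infty$, the starting observation is that every coordinate of $x - P_n(x)$ has modulus at most $t$: the $n$ retained coordinates each contribute exactly $t$, while the remaining coordinates keep their original values $|x_{k_j}| \le t$ for $j \ge n+1$. Writing $a_j := |x_{k_j}|$, so that $a_1 \ge a_2 \ge \cdots$ and $a_{n+1} = t$, for finite $q$ this gives
\[
\|x - P_n(x)\|_{\ell_q^m}^q \ = \ n\,t^q + \sum_{j \ge n+1} a_j^q .
\]
The key step is the cancellation: using $a_j^q \le t^{q-p} a_j^p$ for $j \ge n+1$ (as $a_j \le t$ and $q > p$), together with $n t^q = t^{q-p}(n t^p)$ and $n t^p \le \sum_{j \le n} a_j^p$, the right-hand side collapses to $t^{q-p}\sum_{j=1}^m a_j^p = t^{q-p}\|x\|_{\ell_p^m}^p \le t^{q-p}$. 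Finally $n t^p \le \sum_{j\le n} a_j^p \le 1$ forces $t \le n^{-1/p}$, whence $\|x - P_n(x)\|_{\ell_q^m}^q \le t^{q-p} \le n^{-(q-p)/p} = n^{1 - q/p}$, i.e.\ the claimed $n^{1/q - 1/p}$; the case $q=\infty$ follows directly from $\|x - P_n(x)\|_{\ell_\infty^m} = t \le n^{-1/p}$.

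I expect the main obstacle, and the one genuinely interesting point, to be the cancellation in the last paragraph rather than the continuity: a crude split of the two sums only yields the correct order $n^{1/q-1/p}$ up to a constant like $2^{1/q}$, and it is precisely the soft-thresholding — subtracting the threshold $t$ from each retained coefficient, rather than leaving it untouched as in hard thresholding — that converts $n t^q$ into a term absorbing the tail and produces the sharp constant $1$. Some care is also needed with ties in the ordering, but the soft-threshold formula above is manifestly independent of how ties among the $|x_k|$ are broken, so this causes no difficulty.
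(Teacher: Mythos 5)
Your proof is correct and is essentially the argument behind the paper's citation: the paper gives no inline proof but refers to \cite{Di4}, where the lemma is established by exactly this route --- rewriting $P_n$ as coordinatewise soft thresholding $x_k \mapsto \sgn(x_k)(|x_k|-t)_+$ at the $(n+1)$-st order statistic $t$, deducing continuity from the continuity of order statistics (with the quasi-norms irrelevant in finite dimension), and getting the constant-$1$ bound by absorbing $n\,t^q = t^{q-p}\,n\,t^p$ together with the tail $\sum_{j>n} a_j^q \le t^{q-p}\sum_{j>n} a_j^p$ into the single sum $t^{q-p}\|x\|_{\ell_p^m}^p \le t^{q-p} \le n^{1-q/p}$. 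One small caveat on your closing remark: hard thresholding satisfies the same constant-$1$ estimate (indeed a smaller error, since $\sum_{j>n} a_j^q \le t^{q-p}\sum_{j>n} a_j^p \le t^{q-p}$ already), so the real reason for subtracting the threshold is not sharpness of the constant but the continuity assertion, which hard thresholding violates when the ordering of $|x_k|$ changes.
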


The following theorem gives the upper bound of \eqref{Asymp:[nu]} in Theorem \ref{ThNU}.

\begin{theorem} \label{ThUpperBound} 
Let $ p,q , \theta, \alpha $  satisfy Condition \eqref{Condition}. 
Then for the $d$-variable Besov space 
$B^\alpha_{p,\theta} $,  there is 
the following upper bound
\begin{equation} \label{asympN}
\nu_n(B^\alpha_{p,\theta},L_q) 
\ \ll \ 
n^{- \alpha / d}. 
\end{equation}
If in addition, $\alpha < 2r$, we can find an positive integer $k^*$ and a continuous 
$n$-sampling recovery algorithm $S_n^B \in \Cc(B_{p,\theta}^{\alpha}, L_q)$
of the form \eqref{S_n^B(f)} with $A = \Sigma_n({\bf M}(k^*))$, such that
\begin{equation} \label{UpperBound_N}
\sup_{f \in SB^\alpha_{p,\theta}} \, \|f - S_n^B(f)\|_q 
\ll 
n^{- \alpha / d}.
\end{equation}
\end{theorem}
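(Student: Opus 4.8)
The plan is to obtain \eqref{asympN} and \eqref{UpperBound_N} by a two-step construction: first truncate $f$ at a finite scale $k^*$ by the linear quasi-interpolant $Q_{k^*}$, whose error is controlled by Lemma~\ref{Inequality[|f - Q_k(f)|_q]}, and then feed the B-spline coefficients of $Q_{k^*}(f)$ supplied by \eqref{eq:B-splineRepresentation} into a \emph{continuous} nonlinear $n$-term selection built from the operator $P_n$ of Lemma~\ref{C_Algorithm}. Write $\tau:=\min(q,1)$, the exponent in \eqref{ineq:IneqSumf_k}, and $\alpha':=\alpha-d(1/p-1/q)_+$, so that \eqref{ineq:[|f - Q_m(f)|_q]} reads $\|f-Q_m(f)\|_q\ll 2^{-\alpha' m}\|f\|_{B^\alpha_{p,\theta}}$. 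The regimes $p\ge q$ and $p<q$ are genuinely different: for $p\ge q$ the linear truncation already attains the rate, whereas for $p<q$ the improvement over the non-adaptive bound \eqref{asympG} can only come from a nonlinear, and necessarily economical, choice of active B-splines.

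For $p\ge q$ one has $\alpha'=\alpha$. Choosing $k^*$ with $2^{dk^*}\asymp n$, Lemma~\ref{Inequality[|f - Q_k(f)|_q]} gives $\sup_{\|f\|_{B^\alpha_{p,\theta}}\le1}\|f-Q_{k^*}(f)\|_q\ll 2^{-\alpha k^*}\asymp n^{-\alpha/d}$. By Lemma~\ref{Lemma:Q_m=L_n}, $Q_{k^*}$ is a linear sampling algorithm of the form \eqref{L(f)} using $\asymp n$ points and producing a combination of $\asymp n$ B-splines from ${\bf M}(k^*)$; it is therefore a particular $S_n^B$ with $B=\Sigma_n({\bf M}(k^*))$, and its membership in $\Cc(B^\alpha_{p,\theta},L_q)$ is the first assertion of Lemma~\ref{Inequality[|f - Q_k(f)|_q]}. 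This settles both \eqref{asympN} and \eqref{UpperBound_N} when $p\ge q$.

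For $p<q$ one has $\alpha'=\alpha-d(1/p-1/q)>0$ (the inequality $\alpha>d(1/p-1/q)$ being forced by $\alpha>d/p$), and $Q_{k^*}$ alone yields only the weaker $2^{-\alpha' k^*}$. I would fix the finest scale $k^*$ by $2^{-\alpha' k^*}\asymp n^{-\alpha/d}$, so that the linear tail is already of the target size, and then select terms scale by scale among the B-splines of the scales $k\le k^*$. Using \eqref{eq:B-splineRepresentation}, at each $k\le k^*$ the coefficients $\{c_{k,s}(f)\}_{s\in J^d(k)}$ satisfy $\|\{c_{k,s}(f)\}\|_{p,k}\ll 2^{-(\alpha-d/p)k}b_k$ with $\{b_k\}$ in the unit ball of $\ell_\theta$, since $B_3(f)\ll\|f\|_{B^\alpha_{p,\theta}}$ in Theorem~\ref{Theorem:Representation}. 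Applying the continuous operator $P_{n_k}$ of Lemma~\ref{C_Algorithm} (with domain index $p<q$) keeps $n_k$ of these coefficients, and by the within-scale norm equivalence \eqref{StabIneq.2} the resulting $L_q$-error at scale $k$ is $\ll 2^{-dk/q}n_k^{1/q-1/p}\|\{c_{k,s}(f)\}\|_{p,k}\ll n_k^{-(1/p-1/q)}2^{-\alpha' k}b_k$. Summing over scales through \eqref{ineq:IneqSumf_k} and distributing a total budget $\sum_{k\le k^*}n_k\ll n$ by the standard balancing (keeping all coefficients up to a coarse scale and geometrically fewer afterwards) yields $\sup_{\|f\|_{B^\alpha_{p,\theta}}\le1}\|f-S_n^B(f)\|_q\ll n^{-\alpha/d}$. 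Continuity follows because each $c_{k,s}(f)$ is, by Theorem~\ref{Theorem:Representation}, a fixed linear combination of at most $N$ point values of $f$, so $f\mapsto\{c_{k,s}(f)\}$ is continuous into the relevant sequence space, $P_{n_k}$ is continuous by Lemma~\ref{C_Algorithm}, and the synthesis back into $L_q$ exhibits $S_n^B$ in the sampling form \eqref{eq:[S_n^Sigma]} with $S_n^B\in\Cc(B^\alpha_{p,\theta},L_q)$.

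The step I expect to be the main obstacle is making this construction simultaneously \emph{economical in samples} and \emph{continuous}. The tail bound forces $2^{dk^*}\asymp n^{\alpha/\alpha'}\gg n$, so computing every coefficient $c_{k,s}(f)$ up to scale $k^*$ would use far more than $n$ sample points and would only reprove the linear rate for $\nu_n$; one must instead evaluate, in a coarse-to-fine manner, only the $O(n)$ coefficients actually retained, exploiting the locality in Theorem~\ref{Theorem:Representation} to keep the number of sample points $\ll n$. Organising this adaptive placement of points so that the overall map $f\mapsto S_n^B(f)$ stays continuous is precisely where the soft-thresholding form of $P_n$ in Lemma~\ref{C_Algorithm} is indispensable, and where the scale-wise (rather than global) allocation of the budget is needed to prevent a spurious factor growing in the number $\asymp\log n$ of active scales.
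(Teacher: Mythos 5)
Your construction is essentially the paper's own proof of Theorem \ref{ThUpperBound}: for $p\ge q$ the truncation $Q_{k^*}$ with $2^{dk^*}\asymp n$, and for $p<q$ a coarse linear part $Q_{\bar k}$ with $2^{d\bar k}\asymp n$ plus per-scale soft-thresholded selections $P_{n(k)}$ with geometrically decaying budgets $n(k)\asymp n2^{-\varepsilon(k-\bar k)}$ up to $k^*\asymp \bar k+\varepsilon^{-1}\log n$, with continuity obtained exactly as you say, by composing $q_k$, the coefficient map, $P_{n(k)}$ and B-spline synthesis. The ``main obstacle'' you flag dissolves under the paper's definition of $S_n^B$ in \eqref{S_n^B(f)}: the point map $T_n$ may depend on $f$ arbitrarily, so only the $\ll n$ samples actually feeding the retained coefficients are counted (locality gives at most $(2\mu+2r)^d$ points per coefficient, whence $m'\ll n$) and continuity is required only of the composite $f\mapsto S_n^B(f)$, which your soft thresholding already delivers; the one genuine (minor) omission is case (ii) of Condition \eqref{Condition}, where $\alpha=d/p$ puts Theorem \ref{Theorem:Representation} out of reach and the coefficient bounds \eqref{q_k(f)} must instead be sourced from Lemma \ref{Inequality[|f - Q_k(f)|_q]}, as the paper does.
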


\begin{proof} 
We will prove \eqref{UpperBound_N} 
and therefore, \eqref{asympN}.  We first consider the case $p \ge q$. For any integer 
$k^*$, by Lemmas \ref{Lemma:Q_m=L_n} and \ref{Inequality[|f - Q_k(f)|_q]} we have 
\begin{equation} \label{Q_mError(3)}
\sup_{f \in SB^\alpha_{p,\theta}} \, \|f - Q_{k^*}(f)\|_q 
 \ \asymp \ 
2^{-\alpha k^*}.
\end{equation}
The number of sampled values of $f$ in $Q_{k^*}(f)$ is $n^*:= (2^{k^*} + 1)^d.$ 
For a given integer $n$ (not smaller than $2^d$), define $k^*$ by the condition 
\begin{equation} \label{ChoiceN}
 C n \le n^* = (2^{k^*} + 1)^d \le n,
\end{equation}
 with $C$ an absolute constant.
By Lemma \ref{Inequality[|f - Q_k(f)|_q]} $Q_{k^*} \in \Cc(B_{p,\theta}^{\alpha}, L_q)$.
By the choice of $k^*,$ $Q_{k^*}(f)= S_n^B(f)$ is a 
linear $n$-sampling algorithm $S_n^B(f)$ is of the form \eqref{L(f)} with 
$A = \Sigma_n({\bf M}(k^*))$ and ${\bf M}(k^*)\in {\mathcal G}$ as a finite family. 
Therefore, by \eqref{Q_mError(3)} 
and \eqref{ChoiceN} we receive \eqref{UpperBound_R} for the case $p \ge q$. 

We next treat the case $p < q$. For arbitrary positive integer 
$m,$ a function $f \in SB_{p,\theta}^{\alpha}$  
can be represented by a series
\begin{equation} \label{q_kRepresentation(2)}
f \ =  \ \sum_{k = 0}^m \sum_{s \in J^d(k)}a_{k,s}(f)M_{k,s}
 + \sum_{k = m + 1}^\infty \sum_{s \in J^d(k)}c_{k,s}(f)M_{k,s}
\end{equation}
converging in the norm of $B^\alpha_{p,\theta}$ or, equivalently,
\begin{equation} \label{q_kRepresentation(2b)}
f \ =  \ Q_{m}(f) + \sum_{k = m + 1}^\infty q_k(f)
\end{equation}
with the component functions
\begin{equation} \label{q_k(2)}
q_k(f)
=  \ \sum_{s \in J^d(k)}c_{k,s}(f)M_{k,s}
\end{equation}
from the subspace ${\bf V}(k)$.
Moreover, $q_k(f)$ satisfy the condition
\begin{equation} \label{q_k(f)}
 \|q_k(f)\|_p 
\ \asymp \ 2^{- dk/p}\|\{c_{k,s}(f)\}\|_{p,k}
\ \ll \   2^{-\alpha k}, \quad k = m + 1,m + 2,... 
\end{equation}
The representation  
\eqref{q_kRepresentation(2)}--\eqref{q_k(f)} follows from Theorem \ref{Theorem:Representation} 
for the case (i) in Condition \eqref{Condition}, 
and from Lemma  \ref{Inequality[|f - Q_k(f)|_q]} for the case (ii) in Condition \eqref{Condition}. 

Put $m(k):=|J^d(k)| =  (2^k + 2r - 1)^d.$ 
Let $\bar{k}, k^*$ be non-negative integers with 
$\bar{k} <  k^*,$ and $\{ n(k) \}_{k = \bar{k} + 1}^{k^*}$ a sequence of non-negative integers with 
$n(k) \le m(k)$. We will construct a recovering function of the form
\begin{equation} \label{G(f)(1)}
G(f) := \ \sum_{s \in J(\bar{k})} a_{k,s}(f) M_{k,s} \ 
+ \ \sum_{k=\bar{k}+1}^{k^*} \sum_{j=1}^{n(k)} c_{k,s_j}(f) M_{k,s_j},
\end{equation}
with $s_{k,j} \in J^d(k)$, or equivalently,
\begin{equation} \label{G(f)(2)}
G(f)= \ Q_{\bar{k}}(f) \ + \  \sum_{k=\bar{k}+1}^{k^*} G_k(f).
\end{equation}
 
The algorithms $G_k$ are constructed as follows. For a $f \in SB_{p,\theta}^{\alpha}$, we take the
sequence of coefficients $\{c_{k,s}(f)\}_{s \in J^d(k)}$ and reorder the indexes 
$s \in J^d(k)$  as $\{s_j\}_{j=1}^{m(k)}$
so that
\begin{equation*} 
|c_{k,s_1}(f)| \ge |c_{k,s_2}(f)| 
\ge \cdots |c_{k,s_n}(f)| \ge \cdots |c_{k,m(k)}(f)|, 
\end{equation*}
and then define
\begin{equation*} 
G_k(f) 
\ := \
\sum_{j=1}^{n(k)} \{c_{k,s_j}(f) - |c_{k,s_{n(k) + 1}}(f)|\, \text{sign} \, c_{k,s_j}(f)\} M_{k,s_j}.
\end{equation*}

We prove that $G \in \Cc(B_{p,\theta}^{\alpha}, L_q)$. For $0 < \tau \le \infty$, denote by ${\bf V}(k)_\tau$ 
the quasi-normed space of all functions $f \in {\bf V}(k),$ equipped 
with the quasi-norrm $L_\tau.$ Then by Lemma \ref{Inequality[|f - Q_k(f)|_q]}
$q_k \in \Cc(B_{p,\theta}^{\alpha}, {\bf V}(k)_p)$.
Consider the sequence $\{c_{k,s}(f)\}_{s \in J^d(k)}$ as an element in $\ell_p^{m(k)}$ 
and let the operator $D_k: {\bf V}(k)_p \to \ell_p^{m(k)}$ be defined by $g \mapsto \{a_s\}_{s \in J^d(k)}$ if
$g \in {\bf V}(k)_q$ and $g  = \sum_{s \in J^d(k)} a_s M_{k,s}$. 
Obviously, by \eqref{StabIneq.1}--\eqref{StabIneq.2} $D_k \in \Cc(\Sigma(k)_p, \ell_p^{m(k)})$.
For $x=\{x_{k,s}\}_{s \in J^d(k)} \in l_p^{m(k)}$, we let the set 
$\{k_j\}_{j=1}^{m(k)}$ be ordered  so that
\begin{equation*}
|x_{j_1}| \ge |x_{j_2}| \ge \cdots |x_{j_s}| \ge \cdots \ge |x_{j_{m(k)}}| 
\end{equation*}
and define
\begin{equation*}
P_{n(k)}(x) :=\sum_{j=1}^{n(k)} (x_{k_j} - |x_{n(k)+1}| \, \text{sign} \, x_{k_j})e_{k_j}.
\end{equation*}
Temporarily denote by $H$ the quasi-metric space  of all 
$x=\{x_{k,s}\}_{s \in J^d(k)} \in \ell_q^{m(k)}$ for which $ x_k= 0, k \notin Q$, 
for some subset $Q \subset J^d(k)$ with $|Q|=n(k)$. The quasi-metric of $H$ is generated by 
the quasi-norm of $\ell_q^{m(k)}$. By Lemma \eqref{C_Algorithm} we have $P_{n(k)} \in \Cc(\ell_p^m, H)$.
Consider the mapping $R_{{\bf M}(k)}$ from $H$ into 
$\Sigma_{n(k)}({\bf M}(k))$ defined by
\begin{equation*}
R_{{\bf M}(k)}(x):= \  \sum_{s \in Q} x_{k,s} M_{k,s},
\end{equation*}
if $x=\{x_{k,s}\}_{s \in J^d(k)} \in H$ and $ x_k = 0, k \notin Q$, for some $Q$ with
$|Q|=n(k)$. 
Since the family {\bf M}(k) is bounded in $L_q$, it is easy to verify that 
$R_{{\bf M}(k)} \in \Cc(H,L_q)$. 
We have
\begin{equation*} 
G_k 
\ = \
R_{{\bf M}(k)}\circ P_{n(k)} \circ D_k \circ q_k.
\end{equation*}
Hence, $G_k \in \Cc(B_{p,\theta}^{\alpha}, L_q)$ as  the supercomposition of continuous operators.
Since by Lemma \ref{Inequality[|f - Q_k(f)|_q]} $Q_{\bar{k}}(f) \in \Cc(B_{p,\theta}^{\alpha}, L_q)$, 
from \eqref{G(f)(2)} it follows $G \in \Cc(B_{p,\theta}^{\alpha}, L_q)$. 

Notice that in the operator $G$, the quasi-interpolant $Q_{\bar{k}}(f)$ 
is the main non-adaptive linear part. 
Its adaptive non-linear part is 
a sum of continuous algorithms $G_k$ 
 for a continuous adaptive approximation of each component 
function $q_k(f)$ in the $k$th scale subspaces 
${\bf V}(k)$, $\bar{k} < k \le k^*.$

Let $m$ be the number of the terms in the sum \eqref{G(f)(1)}. 
Then, $G(f) \in \Sigma_m({\bf M}(k^*))$ and 
\begin{equation*}
m 
\ = \
(2^{ \bar{k}} + r -1)^d +  \sum_{k=\bar{k} + 1}^{k^*} n(k). 
\end{equation*}
Moreover, the number of sampled values defining $G(f)$ does not exceed 
\begin{equation*} 
m' := (2^{ \bar{k}} + 1)^d + (2 \mu + 2r)^d \sum_{k=\bar{k} + 1}^{k^*} n(k) . 
\end{equation*}

Let us select $\bar{k},  k^*$ and a sequence $\{ n(k) \}_{k = \bar{k} + 1}^{k^*}$.
 We define an integer 
$\bar{k}$ from the condition
\begin{equation} \label{bar{k}}
C_1 2^{d \bar{k}} \le n < C_2 2^{d \bar{k}},
\end{equation} 
where $C_1, C_2$ are absolute constants which will be chosen  below.

Notice that under the hypotheses of Theorem 
\ref{ThNU} we have $0 < \delta  < \alpha.$ Further, we fix a number 
$\varepsilon$ satisfying the inequalities
\begin{equation} \label{varepsilon}
0 < \varepsilon < (\alpha - \delta ) / \delta,
\end{equation}
where $\delta := d(1/p - 1/q)$. An appropriate selection of $k^*$ and 
$\{n(k)\}_{k=\bar{k}+1}^{k^*}$ is 
\begin{equation} \label{k^*}
k^*:= \ [\varepsilon^{-1} \log (\lambda n)] + \bar{k} +1.
\end{equation}
and 
\begin{equation} \label{n(k)}
n(k) \ = \ 
[\lambda n2^{-\varepsilon(k-\bar{k})}], \quad  
k = \bar{k} +1, \bar{k} + 2, ..., k^*, 
\end{equation} 
with a positive constant $\lambda.$ Here $[a]$ denotes the integer part of the number $a$. 
It is easy to find constants $C_1, C_2$ 
in \eqref{bar{k}} and $\lambda$ in \eqref{n(k)} so that $n(k) \le m(k), k = \bar{k} + 1,..., k^*,$ 
$m \le n$ and $m' \le n.$ Therefore, $G$ is an $n$-sampling algorithm 
$S_n^B$ of the form \eqref{S_n^B(f)} with $B = \Sigma_m({\bf M}(k^*))$  and
${\bf M}(k^*)\in {\mathcal G}$ as a finite family.
Let us give a upper bound for $\|f - S_n^B(f)\|_q$. For a fixed number $0 < \tau \le \min (p,1)$, we have
by  \eqref{ineq:IneqSumf_k},  
\begin{equation}\label{ineq:|f - S_n^B(f)|_q}
\|f - S_n^B(f)\|_q^\tau 
\  \le \ 
\sum_{k=\bar{k}+1}^{k^*} \|q_k(f) -  G_k(q_k(f))\|_q^\tau 
\ + \ \sum_{k > k^*} \|q_k(f) \|_q^\tau.   
\end{equation}
By \eqref{StabIneq.1}--\eqref{StabIneq.2} and \eqref{q_k(f)}
we have for all $f \in SB_{p,\theta}^{\alpha}$ 
\begin{equation} \label{q_k(f)2}
 \|q_k(f)\|_q 
\ \ll \   2^{-(\alpha - \delta) k}, \quad k = k^* + 1, + 2,... 
\end{equation}
Further, we will estimate $ \|q_k(f) -  G_k(q_k(f))\|_q$ for 
all $f \in SB_{p,\theta}^{\alpha}$ and $k = \bar{k} + 1,..., k^*$. 
From Lemma \ref{C_Algorithm} we get
\begin{equation} \label{ErrorG_n}
\left( \sum_{j=n(k) + 1}^{m(k)} |c_{k,s_j}(f)|^q \right)^{1/q}
\  \le  \  \{n(k)\}^{- \delta}\|\{c_{k,s}(f)\}\|_{p,k}.
\end{equation}
By \eqref{StabIneq.1}--\eqref{StabIneq.2}, \eqref{q_k(f)2} and \eqref{ErrorG_n} we obtain for 
all $f \in SB_{p,\theta}^{\alpha}$ and $k = \bar{k} + 1,..., k^*$ 
\begin{equation} \label{ineq:Error[q_k -  G_k]}
\begin{aligned}
\left\|q_k(f) -  G_k(q_k)\right\|_q 
& = \left\|\sum_{j=n(k) +1}^{m(k)} c_{k,s_j}(f) M_{k,s_j}\right\|_q  
 \asymp \ 2^{-k/q}\left( \sum_{j=n(k) + 1}^{m(k)} |c_{k,s_j}(f)|^q \right) ^{1/q} \\
& \ll  \ 2^{ - k/q}\{n(k)\}^{- \delta }\|\{c_{k,s}(f)\}\|_{p,k} 
 \ll  \ 2^{-\alpha k}2^{\delta k}\{n(k)\}^{- \delta}.
\end{aligned}
\end{equation}
From \eqref{ineq:|f - S_n^B(f)|_q} by using  \eqref{ineq:Error[q_k -  G_k]}, \eqref{q_k(f)2},
\eqref{bar{k}}--\eqref{n(k)} 
and the inequality $\alpha  > \delta,$ 
we derive that for all functions $f \in SB_{p,\theta}^{\alpha}$
\begin{equation*}
\begin{aligned}
\|f - S_n^B(f)\|_q^\tau 
\ & \ll 
\sum_{k=\bar{k}+1}^{k^*}2^{- \tau  \alpha k}2^{\tau \delta k}\{n(k)\}^{- \tau \delta}
\ + \ \sum_{k = k^* +1}^\infty 2^{- \tau \alpha k}2^{\tau \delta k} \\
\ & \ll 
n^{- \tau \delta} 2^{- \tau (\alpha - \delta)\bar{k}}
\sum_{k=\bar{k}+1}^{k^*}2^{- \tau (\alpha - \delta + \delta \varepsilon)(k - \bar{k})}
\ + \ 2^{- \tau (\alpha - \delta)k^*}
\sum_{k = k^* +1}^\infty 2^{- \tau(\alpha - \delta)(k - k^*)}  \\
\ & \ll 
n^{- \tau \delta} 2^{- \tau (\alpha - \delta)\bar{k}}
\ + \ 2^{- \tau (\alpha - \delta)k^*} 
\  \ll \ n^{- \tau \alpha /d}. 
\end{aligned}
\end{equation*}
Summing up, we have proven that the constructed  $n$-sampling algorithm 
$G = S_n^B(f) \in \Cc(B_{p,\theta}^{\alpha}, L_q)$ and is of the form \eqref{S_n^B(f)} with 
$A= \Sigma_m({\bf M}(k^*))$,  and ${\bf M}(k^*)\in {\mathcal G}$ as a finite family
for which  the inequality  \eqref{UpperBound_R}
holds true for the case $p < q$.
\end{proof}

\section{Lower bounds of $\nu_n(B^\alpha_{p,\theta},L_q)$ } 
\label{Lowerbounds}

To prove the lower bound  Theorem \ref{ThNU} we compare
$\nu_n(B^\alpha_{p,\theta},L_q)$ with a related non-linear $n$-width which is defined on the basis
of continuous algorithms in $n$-term approximation.

 Let $X, Y$ be  quasi-normed spaces and $X$ is a linear subspace of $Y$. 
Let $W$ be a subset in $X$ and $\Phi = \{\varphi_k\}_{k \in K}$ a family of elements in $Y$. 
Denote by ${\mathcal G}(Y)$ the set of all bounded families $\Phi \subset Y$
whose intersection $\Phi \cap L$ with any finite dimensional subspace $L$
in $Y$ is a finite set. We define the non-linear $n$-width $\tau_n^X(W,Y)$ by
\begin{equation*}
\tau_n^X(W,Y)
\  := \ 
\inf_{\Phi \in {\mathcal G}(Y)}  \  \inf_{S \in \Cc(X,Y): \ S(X) \subset \Sigma_n(\Phi)}  \ 
\sup_{f \in W} \ \| f - S(f) \|_Y.
\end{equation*}
Since all quasi-norms in a finite dimensional linear space are equivalent, we will
drop $"X"$ in the notation $\tau_n^X(W,Y)$ for the case where $Y$ is finite dimensional.

Denote by $SX$  the 
unit ball in the quasi-normed space $X$. By definition we have
\begin{equation} \label{ineq:[nu>tau^B]}
\nu_n(B^\alpha_{p,\theta},L_q) 
\ \ge \ 
\tau_n^B(SB^\alpha_{p,\theta},L_q), 
\end{equation}
where we use the abbreviation: $B := B^\alpha_{p,\theta}$.
 
\begin{lemma} \label{[tau_{n+m}(W,Y)<]}
Let the linear space $L$ be equipped with two equivalent quasi-norms $\|\cdot\|_X$ and 
$\|\cdot\|_Y$, $W$ a subset of $L$. 
If  $\tau_n^X(W,Y)>0$, we have
\begin{equation*}
\tau_{n+m}^X(W,X) \ \le \ \tau_n^X(W,Y) \, \tau_m^X(SY,X).
\end{equation*}
\end{lemma}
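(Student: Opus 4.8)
The plan is to build an $(n+m)$-term continuous approximation of $W$ in the finer quasi-norm $\|\cdot\|_X$ by composing two successive approximations: first approximate $W$ to accuracy $\tau_n^X(W,Y)$ in the coarser quasi-norm $\|\cdot\|_Y$ with $n$ terms, then feed the normalized residual---which lands in the unit ball $SY$---into an $m$-term continuous approximation of $SY$ measured in $\|\cdot\|_X$. Throughout I write $A := \tau_n^X(W,Y)$ and $B := \tau_m^X(SY,X)$, and I will use repeatedly that, since $\|\cdot\|_X$ and $\|\cdot\|_Y$ are equivalent quasi-norms on the common linear space $L$, they share the same bounded sets and the same finite-dimensional subspaces; consequently ${\mathcal G}(X)={\mathcal G}(Y)$ and any map continuous into $Y$ is automatically continuous into $X$.

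Concretely, I would fix $\varepsilon>0$ and choose near-optimal data for each width: $\Phi \in {\mathcal G}(Y)$ and $S_1 \in \Cc(X,Y)$ with $S_1(X)\subset \Sigma_n(\Phi)$ and $\sup_{f\in W}\|f-S_1(f)\|_Y \le A+\varepsilon$, and $\Psi \in {\mathcal G}(X)$ and $S_2 \in \Cc(X,X)$ with $S_2(X)\subset \Sigma_m(\Psi)$ and $\sup_{g\in SY}\|g-S_2(g)\|_X \le B+\varepsilon$. The candidate recovery map is
\begin{equation*}
S(f) := S_1(f) + (A+\varepsilon)\,S_2\!\left(\frac{f-S_1(f)}{A+\varepsilon}\right), \qquad f \in X .
\end{equation*}
Here the hypothesis $A>0$ is exactly what lets me divide by $A+\varepsilon$; for $f\in W$ the quotient $g:=(f-S_1(f))/(A+\varepsilon)$ satisfies $\|g\|_Y \le 1$, so $g\in SY$ and the second width applies. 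By homogeneity of the quasi-norm the error collapses to $\|f-S(f)\|_X=(A+\varepsilon)\,\|g-S_2(g)\|_X \le (A+\varepsilon)(B+\varepsilon)$ for every $f\in W$.

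The remaining checks are that $S$ is a legitimate competitor for $\tau_{n+m}^X(W,X)$. For continuity I note that $S_1$ is continuous into $X$ (by norm-equivalence), hence $f\mapsto (f-S_1(f))/(A+\varepsilon)$ is continuous from $X$ into $X$; composing with $S_2\in\Cc(X,X)$ and adding back $S_1$ keeps $S\in\Cc(X,X)$. For the range I would observe $S(f)\in \Sigma_n(\Phi)+\Sigma_m(\Psi)\subset \Sigma_{n+m}(\Phi\cup\Psi)$, and that $\Phi\cup\Psi\in{\mathcal G}(X)$, since the union of two bounded families each meeting every finite-dimensional subspace in a finite set again has this property. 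Then $\tau_{n+m}^X(W,X)\le (A+\varepsilon)(B+\varepsilon)$, and letting $\varepsilon\to 0$ gives the assertion.

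The conceptual heart---and the only place the two hypotheses of the lemma really enter---is the interplay between the two quasi-norms: equivalence is what transfers continuity of $S_1$ to the target $X$ and identifies the two dictionary classes ${\mathcal G}$, while $\tau_n^X(W,Y)>0$ is what justifies the normalization pushing the residual into $SY$, where the second width controls it. I expect the one genuinely fiddly (though routine) point to be confirming $\Sigma_n(\Phi)+\Sigma_m(\Psi)\subset\Sigma_{n+m}(\Phi\cup\Psi)$ together with $\Phi\cup\Psi\in{\mathcal G}$; everything else is a short homogeneity computation.
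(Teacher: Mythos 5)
Your proposal is correct, and it is essentially the argument the paper invokes: the paper does not spell out a proof but refers to Lemma~4 of \cite{Di3}, whose proof is exactly your two-step composition --- a near-optimal continuous $n$-term map $S_1$ in the $\|\cdot\|_Y$-metric, normalization of the residual $(f-S_1(f))/(A+\varepsilon)$ into $SY$, a near-optimal continuous $m$-term map $S_2$ in $\|\cdot\|_X$, and the dictionary union $\Phi\cup\Psi\in{\mathcal G}$. The only cosmetic remark is that the division is already licensed by $\varepsilon>0$ rather than by $\tau_n^X(W,Y)>0$ (that hypothesis really serves to keep the product on the right-hand side meaningful); otherwise your continuity, range, and homogeneity checks match the cited proof step for step.
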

\begin{proof}
This lemma can be proven is a way similar to the proof of Lemma 4 in {\cite{Di3}}.
\end{proof}

\begin{lemma} \label{tau>}
Let $0 <  q \le \infty$. Then we have  for any positive integer $n < m$
\begin{equation*}
\tau_n(B^m_\infty,\ell^m_q) \ \ge \  \frac{1}{2}(m -n - 1)^{1/q}.
\end{equation*}
\end{lemma}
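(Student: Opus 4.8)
The plan is to prove a lower bound for $\tau_n(B^m_\infty,\ell^m_q)$ by a topological (degree/Borsuk-type) argument combined with a counting of how many coordinates a continuous $n$-term map can possibly control. The quantity $\tau_n(B^m_\infty,\ell^m_q)$ measures how well one can approximate the cube $B^m_\infty$ in the $\ell^m_q$ quasi-norm by a \emph{continuous} selection of $n$-term combinations from a family $\Phi\in\mathcal{G}(\ell^m_q)$, so the goal is to exhibit, for any such $\Phi$ and any $S\in\Cc(\ell^m_q,\ell^m_q)$ with $S(\ell^m_q)\subset\Sigma_n(\Phi)$, a point $x\in B^m_\infty$ with $\|x-S(x)\|_{\ell^m_q}\ge\tfrac12(m-n-1)^{1/q}$.

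\medskip
First I would fix $\Phi$ and $S$ and observe that $\Sigma_n(\Phi)$, intersected with any fixed finite-dimensional space, consists of vectors supported (in the $\Phi$-expansion) on at most $n$ of the elements of $\Phi$; the restriction $\Phi\in\mathcal{G}$ guarantees $\Phi$ meets $\ell^m_q$ in a finite set, so $S$ takes values in a finite union of at-most-$n$-dimensional pieces. The key structural point is that the image $S(x)$ lives in a set that is, informally, ``$n$-dimensional'', whereas $B^m_\infty$ is genuinely $m$-dimensional. I would then set up a continuous map on the boundary sphere of a suitable $(m-n-1)$- or $m$-dimensional face of the cube and use a Borsuk--Ulam / topological-dimension argument to force $S$ to miss some coordinate direction substantially: there must exist a point whose coordinates $S$ cannot simultaneously match on more than $n$ coordinates, leaving at least $m-n-1$ coordinates of unit-size error. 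Each such coordinate contributes $\approx 1$ to $\|x-S(x)\|^q_{\ell^m_q}$, yielding the $(m-n-1)^{1/q}$ scaling after taking the $q$th root.

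\medskip
Concretely, the mechanism I expect to use is the one underlying continuous $n$-term widths in \cite{Di2,Di3,Di4}: consider the odd (antipodal) continuous map $x\mapsto x-S(x)$ restricted to the boundary of the cube, and use the fact that a continuous image inside a low-dimensional set $\Sigma_n(\Phi)$ cannot cover a sphere of the right dimension, so by a Borsuk-type theorem there is a point where $x$ and $S(x)$ are forced apart on enough coordinates. The factor $\tfrac12$ arises because the cube has half-width $1$ in each coordinate, and on the uncontrolled coordinates the best $S$ can do is place its value at $0$, giving per-coordinate error at least $\tfrac12\cdot(\text{something})$; tracking constants carefully through the $q$th-power sum over $\ge m-n-1$ coordinates produces the stated bound.

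\medskip
\textbf{The main obstacle} will be making the ``$\Sigma_n(\Phi)$ behaves like an $n$-dimensional set'' intuition rigorous in a way that feeds a clean topological theorem, since $\Sigma_n(\Phi)$ is a \emph{union} of many $n$-dimensional subspaces (one for each choice of $n$ indices from $K$) rather than a single manifold, and $S$ may switch between these pieces continuously. The delicate part is to show that, despite this combinatorial freedom, continuity of $S$ prevents it from simultaneously tracking more than $n$ coordinates of a generic boundary point; this is exactly where the $\mathcal{G}$-finiteness hypothesis and a dimension/degree argument (rather than a naive linear-algebra count) are essential. I expect the cleanest route is to reduce, via the structure of $\Sigma_n(\Phi)$ inside $\ell^m_q$, to a statement about continuous maps into a set of covering dimension $\le n$ and then invoke a Borsuk--Ulam-type non-existence of antipodal maps $S^{m-1}\to S^{n-1}$ for $n<m$, transporting the resulting gap into the $\ell^m_q$ quasi-norm estimate.
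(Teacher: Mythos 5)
Your proposal has a genuine gap at its central quantitative step. You assert that a Borsuk-type theorem applied to the map $x\mapsto x-S(x)$ yields a point at which $S$ fails on ``at least $m-n-1$ coordinates of unit-size error.'' No standard degree or antipodality argument delivers a statement of this per-coordinate form: what such arguments give (and what the paper actually uses, via the inequality $2\tau_n(W,Y)\ge b_n(W,Y)$ from \cite{Di3}) is a lower bound by the \emph{Bernstein width}, i.e.\ a single point where the ambient quasi-norm of $x-S(x)$ is at least the Bernstein radius of $W$ in $Y$. If you run this directly in $\ell^m_q$, as your sketch does, you must exhibit an $(n+1)$-dimensional subspace $L$ with $\inf\{\|x\|_{\ell^m_q}:\ x\in L,\ \|x\|_{\ell^m_\infty}=1\}$ of order $(m-n-1)^{1/q}$. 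Coordinate (or coordinate-block) subspaces only give about $(m/(n+1))^{1/q}$, which is $O(1)$ in the regime actually needed in Section \ref{Lowerbounds} (there $m>2n$ with $m\asymp n$, so the lemma must produce $\asymp m^{1/q}$), and for general $0<q\le\infty$ no elementary choice of $L$ does better. So the one-shot topological argument fails quantitatively, and your acknowledged obstacle (rigorizing ``$\Sigma_n(\Phi)$ is $n$-dimensional''), while real, is not the decisive one. A further technical point: $x\mapsto x-S(x)$ is not odd for a general continuous $S$, so Borsuk--Ulam cannot be invoked as stated; the nonlinear-width machinery of \cite{Di3} is designed precisely to circumvent this.

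The paper's proof avoids your difficulty by applying topology only where the Bernstein width is trivial: in $\ell^m_\infty$ one has $b_{m-1}(B^m_\infty,\ell^m_\infty)=1$ (take $L_m$ to be the whole space), whence $\tau_{m-1}(B^m_\infty,\ell^m_\infty)\ge 1/2$ --- this is also the true source of the factor $\tfrac12$, not the half-width of the cube. The factor $(m-n-1)^{1/q}$ then comes not from topology at all but from the multiplicative inequality of Lemma \ref{[tau_{n+m}(W,Y)<]},
\begin{equation*}
\tau_{m-1}(B^m_\infty,\ell^m_\infty)\ \le\ \tau_n(B^m_\infty,\ell^m_q)\,\tau_{m-n-1}(B^m_q,\ell^m_\infty),
\end{equation*}
combined with the explicit continuous soft-thresholding operator $P_{m-n-1}$ of Lemma \ref{C_Algorithm}, which gives $\tau_{m-n-1}(B^m_q,\ell^m_\infty)\le (m-n-1)^{-1/q}$. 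In effect, one composes the given $n$-term map in $\ell^m_q$ with a continuous $(m-n-1)$-term approximation of the residual and applies the Bernstein-width bound to the composite $(m-1)$-term map in the sup-norm; your per-coordinate picture (the $(m-n)$-th largest residual coordinate being $\ge 1/2$ at the bad point) is a correct \emph{a posteriori} reading of this chain, but it is a consequence of the composition trick, not an input obtainable directly from a Borsuk-type theorem. To repair your proposal you would need to supply exactly this factorization step, or else prove the existence of $(n+1)$-dimensional subspaces of $\ell^m_q$ with Bernstein radius $\asymp(m-n-1)^{1/q}$ relative to the cube, which is a substantially harder (Kashin-type) problem for general $q$.
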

\begin{proof}
 We need the following inequality.
If $W$ is a compact subset in the
finite dimensional normed space $Y$, then we have \cite{Di3}
\begin{equation} \label{[tau_n(W,X)>]}
2\tau_n(W,Y) \ \ge \ b_n(W,Y),
\end{equation}
where the Bernstein $n$-width $b_n(W,Y)$ is defined by
\begin{equation*}
b_n(W,Y):= \   \sup_ {L_{n+1}} \, \sup \{t >0: \ tSY \cap L_{n+1} \ \subset \ W \}
\end{equation*}
with the outer supremum taken over all $(n+1)$-dimensional linear
manifolds $L_{n+1}$ in $Y$.

 By definition we have
\begin{equation*}
b_{m-1}(B^m_\infty,\ell^m_\infty) \ = \  1.
\end{equation*}
Hence, by  \eqref{[tau_n(W,X)>]}, Lemmas \ref{C_Algorithm} and  \ref{[tau_{n+m}(W,Y)<]}
we derive that
\begin{equation*}
\begin{aligned}
1
 \ = \  
b_{m-1}(B^m_\infty,\ell^m_\infty)
 \ & \le \ 
2 \tau_{m-1}(B^m_\infty,\ell^m_\infty) \\
 \ \le \ 
2 \tau_n(B^m_\infty,\ell^m_q)\tau_{m-n-1}(B^m_q,\ell^m_\infty)
 \ & \le \ 
 2(m - n - 1)^{-1/q}\tau_n(B^m_\infty,\ell^m_q).
 \end{aligned}
\end{equation*}
This proves the lemma.
\end{proof}

\begin{theorem} \label{[Thnu>>]} 
Let $0 < p,q, \theta \le \infty$ and $\alpha > 0$. 
Then we have
\begin{equation*} 
\nu_n(B^\alpha_{p,\theta},L_q)  
\ \gg \ 
n^{- \alpha / d}. 
\end{equation*}
\end{theorem}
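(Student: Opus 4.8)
The plan is to establish the lower bound $\nu_n(B^\alpha_{p,\theta},L_q) \gg n^{-\alpha/d}$ by reducing to the finite-dimensional non-linear $n$-width estimate already prepared in Lemma \ref{tau>}. The first step is to invoke the inequality \eqref{ineq:[nu>tau^B]}, namely $\nu_n(B^\alpha_{p,\theta},L_q) \ge \tau_n^B(SB^\alpha_{p,\theta},L_q)$, so that it suffices to bound the non-linear width $\tau_n^B$ from below. The strategy for $\tau_n^B$ is to exhibit inside the unit ball $SB^\alpha_{p,\theta}$ a suitably scaled copy of a finite-dimensional ball on which the $L_q$-geometry reproduces that of $\ell_q^m$, and then apply Lemma \ref{tau>}.

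Concretely, I would fix a dyadic level $k$ and work in the scale subspace ${\bf V}(k) = \operatorname{span} {\bf M}(k)$, whose dimension $m = m(k) \asymp 2^{dk}$ governs the counting. Using the norm equivalence \eqref{StabIneq.1}--\eqref{StabIneq.2}, a function $g = \sum_{s \in J^d(k)} a_s M_{k,s}$ satisfies $\|g\|_q \asymp 2^{-dk/q}\|\{a_s\}\|_{q,k}$, so up to the factor $2^{-dk/q}$ the $L_q$-metric on ${\bf V}(k)$ is isometric to $\ell_q^{m}$. To place such functions in the Besov unit ball, I would control the $B^\alpha_{p,\theta}$-quasi-norm of these single-level elements: for $g \in {\bf V}(k)$ one has, by the discrete characterization $B_3$ and Theorem \ref{Theorem:Representation}, an estimate $\|g\|_{B^\alpha_{p,\theta}} \asymp 2^{(\alpha - d/p)k}\|\{a_s\}\|_{p,k}$. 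Choosing coefficients with $\|\{a_s\}\|_{\infty,k} \le 1$ (i.e. a copy of $B^m_\infty$) gives $\|\{a_s\}\|_{p,k} \le m^{1/p} \asymp 2^{dk/p}$, whence $\|g\|_{B^\alpha_{p,\theta}} \ll 2^{\alpha k}$. Therefore the set $c\,2^{-\alpha k}\{\,g : \|\{a_s\}\|_{\infty,k} \le 1\,\}$ lies in $SB^\alpha_{p,\theta}$ for a suitable absolute constant $c$; under the metric identification this is a scaled copy of $B^m_\infty$ carrying the $\ell_q^m$ metric with an extra factor $2^{-dk/q}$.

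Combining the two scalings and the monotonicity of $\tau_n$ under isometric embedding, I would obtain
\begin{equation*}
\tau_n^B(SB^\alpha_{p,\theta},L_q)
\ \gg \
2^{-\alpha k} \, 2^{-dk/q} \, \tau_n(B^m_\infty,\ell^m_q).
\end{equation*}
Now take $m = m(k)$ with $k$ chosen so that $m(k) \asymp 2^{dk}$ is a fixed constant multiple of $n$, say $m \ge 2n + 2$; then Lemma \ref{tau>} yields $\tau_n(B^m_\infty,\ell^m_q) \ge \tfrac12 (m - n - 1)^{1/q} \gg m^{1/q} \asymp 2^{dk/q}$. The factor $2^{dk/q}$ cancels exactly against $2^{-dk/q}$, leaving $\tau_n^B(SB^\alpha_{p,\theta},L_q) \gg 2^{-\alpha k} \asymp n^{-\alpha/d}$, since $2^{dk} \asymp n$. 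Together with \eqref{ineq:[nu>tau^B]} this gives the claimed bound.

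The main obstacle I anticipate is not the width estimate itself — Lemma \ref{tau>} does the combinatorial heavy lifting — but the careful bookkeeping of the two competing normalizations so that a genuine single-level ball lands inside $SB^\alpha_{p,\theta}$ while simultaneously presenting the right $\ell_q^m$ geometry. One must verify that the $B^\alpha_{p,\theta}$-norm of a single-scale spline is comparable to $2^{(\alpha-d/p)k}\|\{a_s\}\|_{p,k}$ (this needs the single nonzero level in the $B_3$ characterization and the hypothesis $\alpha > 0$, with the passage through Theorem \ref{Theorem:Representation} requiring $\alpha < 2r$, so one may need a preliminary reduction to that range or a direct $\omega_l$ estimate), and that the embedding of ${\bf V}(k)$ into $\ell_q^m$ is a true isometry up to the stated constant, so that $\tau_n$ transfers without loss. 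Keeping the constants $c$, the level $k$, and the dimension $m$ consistent across both inequalities is where the argument must be written with care.
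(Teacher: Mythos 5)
Your overall route coincides with the paper's: reduce via \eqref{ineq:[nu>tau^B]} to the width $\tau_n^B$, scale a single-level spline ball into $SB^\alpha_{p,\theta}$ by a Bernstein-type inequality, identify ${\bf V}(k)$ with $\ell^m_q$ through the stability relations \eqref{StabIneq.1}--\eqref{StabIneq.2}, and finish with Lemma \ref{tau>} at $m \asymp 2^{dk} \asymp n$. However, there is a genuine gap at the transfer step. You pass from $\tau_n^B(SB^\alpha_{p,\theta},L_q)$ to $2^{-\alpha k}\,2^{-dk/q}\,\tau_n(B^m_\infty,\ell^m_q)$ by invoking ``monotonicity of $\tau_n$ under isometric embedding.'' This is not valid: in $\tau_n^B(\cdot,L_q)$ the family $\Phi$ and the approximants $S(f) \in \Sigma_n(\Phi)$ range over all of $L_q$, not over ${\bf V}(k)$, and an isometric copy of $\ell^m_q$ inside $L_q$ says nothing about how well its elements can be approximated by $n$-term combinations drawn from outside that copy --- enlarging the pool of approximants can only decrease the width, which is exactly the wrong direction for a lower bound. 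The paper closes this hole with a projection argument: it takes the DeVore--Popov bounded linear projector $T_k$ from $L_q$ onto ${\bf V}(k)_q$, with norm bounded by an absolute constant, and applies inequality \eqref{ineq:[tau_n^Y(W,X)]} to push an arbitrary continuous $n$-term approximant into ${\bf V}(k)_q$ at the cost of a constant factor (composition with $T_k$ preserves continuity and the $n$-term structure with the projected family, while $T_k$ fixes $U(k)$). Only after this localization, the paper's step \eqref{4.2}, does the finite-dimensional estimate of Lemma \ref{tau>} become applicable. Without this projection step, or some substitute for it, your chain of inequalities does not go through.

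A secondary, fixable weakness is your Bernstein-type estimate. The claimed equivalence $\|g\|_{B^\alpha_{p,\theta}} \asymp 2^{(\alpha - d/p)k}\|\{a_s\}\|_{p,k}$ for $g \in {\bf V}(k)$ does not follow from Theorem \ref{Theorem:Representation}: the quasi-norm $B_3$ is built from the coefficients $c_{k,s}(g)$ of the quasi-interpolant representation, and since $Q_m$ reproduces polynomials but not splines, a single-level spline has representation coefficients spread over all levels rather than concentrated at level $k$; moreover that theorem assumes $d/p < \alpha < 2r$, while Theorem \ref{[Thnu>>]} allows any $\alpha > 0$. The paper sidesteps both issues: it first uses the inclusion $SB^\alpha_{\infty,\theta} \subset SB^\alpha_{p,\theta}$ to reduce to $p = \infty$, then cites the Bernstein inequality $\|f\|_{B^\alpha_{\infty,\theta}} \le C 2^{\alpha k}\|f\|_\infty$ for $f \in {\bf V}(k)$ from the DeVore--Popov paper, choosing the spline order $r$ of the test construction large enough that $\alpha < \min(2r, 2r-1+1/p)$ --- permissible because the lower bound only requires exhibiting one hard test set while the family $\Phi$ remains arbitrary. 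Your fallback suggestion of a direct $\omega_l$ estimate would indeed land on the same Bernstein inequality, but as written this part also needs repair.
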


\begin{proof} 
By \eqref{ineq:[nu>tau^B]} the theorem follows from the 
inequality
\begin{equation} \label{ineq:tau^B>>}
\tau_n^B(SB^\alpha_{p,\theta},L_q)  
\ \gg \ 
n^{- \alpha / d}. 
\end{equation}
To prove \eqref{ineq:tau^B>>} we will need an additional inequality. 
Let $Z$ is a subspace of the quasi-normed  space
$Y$ and $W$ a subset of the quasi-normed  space
$X$. If $P:Y \to Z$ is a linear
projection such that $\|P(f)\|_Y \le \lambda\|f\|_Y (\lambda >0)$ for every $ f \in Y$, 
then it is easy to verify that
\begin{equation} \label{ineq:[tau_n^Y(W,X)]}
\tau_n^X(W,Y) \ \ge \ \lambda^{-1}\tau_n^X(W,Z).
\end{equation}

Because of the inclusion 
$ U:= SB^\alpha_{\infty,\theta} \subset SB^\alpha_{p,\theta}$, we have
\begin{equation} \label{ineq:[tau_n^B]}
\tau_n^B(SB^\alpha_{p,\theta},L_q)   
\ \ge \ 
\tau_n^B(U,L_q).
\end{equation}
Fix an integer $r$ with the condition
$\alpha < \min (2r, 2r - 1 +1/p, 2r)$. 
Let $U(k):= \{ f \in {\bf V}(k):  \|f\|_\infty \le 1\}.$  
For each $f \in {\bf V}(k),$ there holds the Bernstein inequality \cite{DP}.
\begin{equation*} 
\| f \|_{B^\alpha_{\infty,\theta}} 
\ \le \ 
C 2^{\alpha k} \| f \|_\infty, 
\end{equation*}
where $C > 0$ does not depend on $f$ and $k$.
Hence,
$C^{-1} 2^{-\alpha k} U(k)$ is a subset in $U$.
This implies the inequality
\begin{equation} \label{4.1}
\tau_n^B(U,L_q)  \  
\ \gg \ 2^{- \alpha k}
\tau_n^B(U(k),L_q).
\end{equation}
Denote by ${\bf V}(k)_q$ the quasi-normed space of all functions $f \in {\bf V}(k),$ equipped 
with the quasi-norm $L_q.$ Let $T_k$ be the bounded linear projector from $L_q$ onto ${\bf V}(k)_q$ 
constructed in \cite{DP} such that
$\|T_k(f)\|_q \le \lambda'\|f\|_q$ for every 
$f \in L_q$ , where $\lambda'$ is an absolute constant.  
Therefore, by \eqref{ineq:[tau_n^Y(W,X)]}
\begin{equation} \label{4.2}
\tau_n^B(U(k),L_q) \  
\ \gg \ \tau_n^B(U(k),{\bf V}(k)_q)
\ = \ \tau_n(U(k),{\bf V}(k)_q).
\end{equation}
Observe that 
$m:= \ |J^d(k)| \ = \ \text{dim} {\bf V}(k)_q = (2^k + 2r - 1)^d \ \asymp \ 2^{dk}.$ 
For a non-negative integer $n$, define $m=m(n)$ from  the condition 
\begin{equation} \label{4.4}
n \ \asymp \ 2^{dk} \  \asymp \ m \ > \ 2n.
\end{equation}
Consider the quasi-normed space $\ell^m_q$ of all sequences $\{a_s\}_{s \in J^d(k)}$.
Let the natural continuous linear one-to-one mapping $\Pi$ from ${\bf V}(k)_q$ onto $\ell^m_q$ be defined  by
\begin{equation*}
\Pi (f):= \ \{a_s\}_{s \in J^d(k)}
\end{equation*}
if $f \in {\bf V}(k)_q$ and $f  = \sum_{s \in J^d(k)} a_s M_{k,s}$.
We have by \eqref{StabIneq.1}--\eqref{StabIneq.2} 
$\|f\|_\infty  \asymp \|\Pi (f)\|_{\ell^m_\infty}$ and 
$\|f\|_q  \asymp 2^{-dk/q}\|\Pi (f)\|_{\ell^m_q}.$
Hence, we obtain by Lemma \ref{tau>}
\begin{equation*}
\begin{aligned}
\tau_n(U(k), {\bf V}(k)_q)
\ & \asymp \ 
2^{- dk/q}
\tau_n (B^m_\infty, \ell^m_q) \\ 
\ & \gg \ 2^{- dk/q} (m - n - 1)^{1/q}
\  \gg \ 1.
\end{aligned}
\end{equation*}
Combining the last estimates and \eqref{ineq:[tau_n^B]}--\eqref{4.4} 
completes the proof of \eqref{ineq:tau^B>>}.
\end{proof}

\section{Adaptive non-continuous sampling recovery} 
\label{NC_recovery}
In this section, we prove the asymptotic order of  $s_n(B^\alpha_{p,\theta},{\bf M},L_q)$, 
$r_n(SB^\alpha_{p,\theta})_q$ and $e_n(SB^\alpha_{p,\theta})_q$ 
in Theorem \ref{ThNU}.

Let $W$ and~$B$ be subsets in $L_q$. 
For approximation of elements from $W$ by $B$, the quantity
\begin{equation*}
E(W, B)_q := \  \sup_{f \in W} \inf_{\varphi \in  B} \|f-\varphi\|_q
\end{equation*}
gives the worst case error of approximation. 

Let $\Phi = \{\varphi_k\}_{k \in K}$ 
be a family of elements in $L_q$. 
The  quantity of $n$-term approximation 
$\sigma_n(W,\Phi)_q$  
with regard to $\Phi,$ is defined by
\begin{equation*}
\sigma_n(W,\Phi)_q
\ := \ E(W, \Sigma_n(\Phi))_q.
\end{equation*}

Given a family ${\mathcal B}$ of subsets in $L_q$, we can consider the best 
approximation by $B$ from ${\mathcal B}$ in terms of the quantity
\begin{equation} \label{d(W,B,X)}
d(W, {\mathcal B})_q := \  \inf_{B \in {\mathcal B}}E(W, B)_q.
\end{equation}

Notice the following useful identities
\begin{equation*}
\sigma_n(W,\Phi)_q
\ = \ \inf_{S:\, W \to \Sigma_n(\Phi)} \ \sup_{f \in W}\ \|f - S(f)\|_q.
\end{equation*}
and
\begin{equation} \label{eq:d(W,B,X)}
d(W, {\mathcal B})_q
\ = \ \inf_{B\in {\mathcal B}} \ \inf_{S^B:\, W \to B} 
\ \sup_{f \in W}\ \|f - S^B(f)\|_q.
\end{equation}

The quantity $d(W, {\mathcal B})_q$ is called the entropy $n$-width (entropy number)
$\varepsilon_n(W)_q$ if ${\mathcal B}$ in \eqref{d(W,B,X)} is the family of all subsets $B$
of $L_q$ such that $|B| \le 2^n$. 
The non-linear $n$-width $\rho_n(W)_q$ is defined only when $L_q$ is a space of real-valued
functions on a set $\Omega$, if ${\mathcal B}$ in \eqref{d(W,B,X)}
is the family of all subsets in $L_q$ of pseudo-dimension at most $n.$ 

From \eqref{eq:d(W,B,X)}
we have
\begin{equation*}
\varepsilon_n(W)_q
\ = \ \inf_{|B| \le 2^n} \ \inf_{S^B:\, W \to B} \ \sup_{f \in W}\ \|f - S^B(f)\|_q,
\end{equation*}
and
\begin{equation*} 
\rho_n(W)_q
\ = \ \inf_{\dimp B \le n} \ \inf_{S^B:\, W \to B} \
\sup_{f \in W}\ \|f - S^B(f)\|_q.
\end{equation*}
Therefore, we can take the last identities as alternative definitions 
of $\varepsilon_n(W)_q$ and $\rho_n(W)_q$.

\begin{theorem} 
Let $ p,q , \theta, \alpha $  satisfy Condition \eqref{Condition} and $\alpha < 2r$. 
Then for the $d$-variable Besov class 
$SB^\alpha_{p,\theta}$,  we can explicitly construct an $n$-sampling algorithm $S_n^B$ with
 $B = \Sigma_n({\bf M})$ so that
\begin{equation} \label{asympS}
\sup_{f \in SB^\alpha_{p,\theta}} \|f - S_n^B(f)\|_q
\ \asymp \ 
s_n(B^\alpha_{p,\theta}, {\bf M}, L_q)  
\ \asymp \
n^{- \alpha / d}.
\end{equation}
\end{theorem}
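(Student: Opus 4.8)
The plan is to prove the two-sided estimate in \eqref{asympS} by combining the upper bound already constructed in Theorem \ref{ThUpperBound} with a lower bound obtained from the related continuous $n$-width $\nu_n$. The key observation is that the algorithm $G = S_n^B$ built in the proof of Theorem \ref{ThUpperBound} is an $n$-sampling algorithm of the form \eqref{eq:[S_n^Sigma]} with $B = \Sigma_m({\bf M}(k^*)) \subset \Sigma_n({\bf M})$, since ${\bf M}(k^*) \subset {\bf M}$. Hence the very same construction gives the upper bound
\begin{equation*}
\sup_{f \in SB^\alpha_{p,\theta}} \|f - S_n^B(f)\|_q \ \ll \ n^{-\alpha/d},
\end{equation*}
for an explicitly constructed sampling algorithm $S_n^B$ with $B = \Sigma_n({\bf M})$. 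Taking the infimum over all such algorithms, this immediately yields $s_n(B^\alpha_{p,\theta}, {\bf M}, L_q) \ll n^{-\alpha/d}$ by the definition \eqref{def:[s_n]}.

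For the lower bound, I would exploit the fact that $s_n$ removes the continuity restriction present in $\nu_n$ but fixes the dictionary to be ${\bf M}$, whereas $\nu_n$ optimizes over all $\Phi \in {\mathcal G}$ but imposes continuity of $S_n^B$. The cleanest route is to observe that the lower bound argument of Theorem \ref{[Thnu>>]} does not truly require continuity: the chain of inequalities there reduces matters, via the Bernstein inequality on ${\bf V}(k)$ and the projector $T_k$ of \cite{DP}, to estimating a purely finite-dimensional non-linear width of $B^m_\infty$ in $\ell^m_q$. One can rerun the same reduction for the fixed dictionary ${\bf M}$ (whose restriction to scale $k$ is exactly ${\bf M}(k)$), so that $s_n(B^\alpha_{p,\theta}, {\bf M}, L_q)$ is bounded below by a constant multiple of $2^{-\alpha k} \tau_n(U(k), {\bf V}(k)_q)$ with the continuity dropped from $\tau_n$; the finite-dimensional estimate from Lemma \ref{tau>} (or its non-continuous analogue, which only uses the Bernstein width $b_n$) still gives the lower bound $\gg n^{-\alpha/d}$. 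This establishes $s_n(B^\alpha_{p,\theta}, {\bf M}, L_q) \gg n^{-\alpha/d}$, completing \eqref{asympS}.

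I expect the main obstacle to lie in the lower bound, specifically in adapting the width-comparison machinery of Section \ref{Lowerbounds} to the non-continuous, fixed-dictionary setting of $s_n$. The inequalities in Lemma \ref{[tau_{n+m}(W,Y)<]} and the passage through $b_n$ in Lemma \ref{tau>} are stated for the continuous width $\tau_n$; one must check that the analogous comparison $2\,\sigma_n(W, {\bf M})_q \geq b_n(W, L_q)$-type bound holds when only the dictionary is fixed and no continuity is assumed, so that the non-linear $n$-term approximation number $\sigma_n(SB^\alpha_{p,\theta}, {\bf M})_q$ inherits the correct lower rate. Since $s_n \geq \sigma_n(SB^\alpha_{p,\theta}, {\bf M})_q$ trivially (a sampling algorithm is in particular a map into $\Sigma_n({\bf M})$), it suffices to lower-bound this best $n$-term approximation quantity; the finite-dimensional reduction then proceeds verbatim, using the Bernstein width which requires no continuity. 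The remaining steps — the Nikol'skii-type inequality \eqref{ineq:Nikolskii}, the stability \eqref{StabIneq.1}--\eqref{StabIneq.2}, and the elementary estimate $(m-n-1)^{1/q} \asymp m^{1/q} \asymp 2^{dk/q}$ from \eqref{4.4} — are all routine and already available in the excerpt.
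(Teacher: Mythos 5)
Your upper bound is sound and in fact more self-contained than the paper's: reusing the explicit algorithm $G$ of Theorem \ref{ThUpperBound}, noting that $\Sigma_m({\bf M}(k^*))\subset\Sigma_n({\bf M})$ and that $G$ uses at most $n$ sampled values, does give $s_n(B^\alpha_{p,\theta},{\bf M},L_q)\ll n^{-\alpha/d}$; the paper instead cites the construction of \cite[Corollary 2.3, Theorem 3.2]{Di7}, and dropping continuity only enlarges the class of admissible algorithms, so this direction is fine. Your reduction $s_n(B^\alpha_{p,\theta},{\bf M},L_q)\ge\sigma_n(SB^\alpha_{p,\theta},{\bf M})_q$ also matches the paper exactly.

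The genuine gap is in your plan for lower-bounding $\sigma_n(SB^\alpha_{p,\theta},{\bf M})_q$, and it occurs at both steps of the proposed reduction. First, the passage through the projector $T_k$ does not go through ``verbatim'': an $n$-term approximant from ${\bf M}$ uses B-splines $M_{k',s}$ of all scales $k'$, and $T_k(M_{k',s})$ for $k'\neq k$ is a generic element of ${\bf V}(k)$, not a B-spline of scale $k$; so after projecting you face $\Sigma_n(T_k({\bf M}))$ with $T_k({\bf M})$ an \emph{infinite} bounded subset of the finite-dimensional space ${\bf V}(k)_q$, which is strictly richer than $\Sigma_n({\bf M}(k))$. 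Second, and more fundamentally, the comparison $2\tau_n(W,Y)\ge b_n(W,Y)$ in \eqref{[tau_n(W,X)>]} is a topological (Borsuk-type) statement about \emph{continuous} maps from \cite{Di3}; its non-continuous analogue ``$2\sigma_n\ge b_n$'' is false in general. For a counterexample, take a finite dictionary whose normalized elements form an $\varepsilon$-net of directions in an $m$-dimensional ball: a discontinuous one-term selection then approximates the ball to within $\varepsilon$, while $b_{m-1}=1$. This is precisely why the continuity restriction is built into $\nu_n$ and $\tau_n$, and membership of the dictionary in ${\mathcal G}$ does not rescue the argument once continuity is dropped. The paper avoids the issue entirely: the bound $\sigma_n(SB^\alpha_{p,\theta},{\bf M})_q\gg n^{-\alpha/d}$ is quoted from \cite[Theorem 5.1]{Di7}, whose proof exploits the specific structure of the B-spline dictionary (locality of the supports of the $M_{k,s}$ and a counting argument over cells at a fine scale), not width comparisons. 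To repair your proof you would need to substitute such a locality/counting argument for the Bernstein-width machinery in the lower bound.
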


\begin{proof}
In \cite[Corollary2.3, Theorem 3.2]{Di7} an $n$-sampling algorithm $S_n^B$ with
 $B = \Sigma_n({\bf M})$ was explicitly constructed such that
\begin{equation*}
\sup_{f \in SB^\alpha_{p,\theta}} \|f - S_n^B(f)\|_q
\ \ll \ 
n^{- \alpha / d}.
\end{equation*}
 This proves the upper bound of \eqref{asympS}. 

The lower bound 
follows from the inequality 
$s_n(B^\alpha_{p,\theta}, {\bf M}, L_q) \ \ge \ \sigma_n(SB^\alpha_{p,\theta}, {\bf M})_q $ and 
the inequality 
\begin{equation*} 
\sigma_n(SB^\alpha_{p,\theta}, {\bf M})_q  
\ \gg \
n^{- \alpha / d}.
\end{equation*}
which was proven in \cite[Theorem 5.1]{Di7} 
\end{proof}
\begin{theorem} 
Let $ p,q , \theta, \alpha $  satisfy Condition \eqref{Condition}. 
Then for the $d$-variable Besov class 
$SB^\alpha_{p,\theta}$,  there is 
the following asymptotic order
\begin{equation} \label{asympR}
r_n(SB^\alpha_{p,\theta})_q 
\ \asymp \ 
n^{- \alpha / d}. 
\end{equation}
If in addition, $\alpha < 2r$, we can explicitly construct a subset  $B$ in
$\Sigma_n({\bf M})$ having  $\dimp(B) \le n$, and a  
sampling recovery method $S_n^B$
of the form \eqref{S_n^B(f)}, such that
\begin{equation} \label{UpperBound_R}
\sup_{f \in SB^\alpha_{p,\theta}} \|f - S_n^B(f)\|_q 
\ll 
n^{- \alpha / d}.
\end{equation}
\end{theorem}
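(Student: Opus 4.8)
The plan is to prove the two bounds of \eqref{asympR} separately: the upper bound by re‑using the explicit continuous sampling algorithm already constructed for $\nu_n$ and supplementing it with a pseudo‑dimension estimate, and the lower bound by a covering/packing argument in the spirit of Ratsaby–Maiorov. Throughout I fix $r$ with $\alpha<2r$, which is always possible for $\alpha$ satisfying Condition \eqref{Condition} and changes neither the space $B^\alpha_{p,\theta}$ nor the rate.

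For the upper bound I would take $S_n^B:=G$, the algorithm built in the proof of Theorem \ref{ThUpperBound}, which already satisfies $\sup_{f\in SB^\alpha_{p,\theta}}\|f-G(f)\|_q\ll n^{-\alpha/d}$ and hence gives \eqref{UpperBound_R}. It then only remains to exhibit a containing set $B\subset\Sigma_n({\bf M})$ with $\dimp(B)\le n$. Since each value $G(f)=Q_{\bar k}(f)+\sum_{k=\bar k+1}^{k^*}G_k(f)$ lies in the span of the at most $m\le n$ B‑splines it actually uses, the image is contained in $\bigcup_P V_P$, where $P$ runs over the admissible support patterns (all of $J^d(\bar k)$ at the low scales, and an $n(k)$‑subset of $J^d(k)$ at each scale $\bar k<k\le k^*$) and $V_P:=\mathrm{span}\{M_{k,s}:(k,s)\in P\}$ has $\dim V_P\le m\le n$. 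I take $B$ to be this union, which lies in $\Sigma_n({\bf M})$.

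The pseudo‑dimension of $B$ is then controlled by the standard bound $\dimp\bigl(\bigcup_P V_P\bigr)\le C\bigl(\max_P\dim V_P+\log_2 T\bigr)$ for a union of $T$ linear spaces, with $T=\prod_{k=\bar k+1}^{k^*}\binom{m(k)}{n(k)}$ and $m(k)=|J^d(k)|\asymp 2^{dk}$. Using $\binom{m(k)}{n(k)}\le(e\,m(k)/n(k))^{n(k)}$, the paper's choice $n(k)=[\lambda n2^{-\varepsilon(k-\bar k)}]$, and $2^{d\bar k}\asymp n$, one gets $m(k)/n(k)\asymp 2^{(d+\varepsilon)(k-\bar k)}$, so $\log_2 T\ll\sum_{k>\bar k}n(k)(k-\bar k)\ll n\sum_{j\ge1}j\,2^{-\varepsilon j}\ll n$, the series being convergent. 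Hence $\dimp(B)\le Cn$, and rescaling the construction (equivalently replacing $n$ by $cn$, which preserves the order of the rate) yields $\dimp(B)\le n$. This is the crux of the upper bound: the naive estimate $B\subset\Sigma_m({\bf M}(k^*))$ would cost a factor $\log|{\bf M}(k^*)|\asymp\log n$, and it is essential to count scale by scale so that the geometric decay of $n(k)$ cancels the growth of $\log(m(k)/n(k))$.

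For the lower bound it suffices to bound below $\inf_{\dimp B\le n}E(SB^\alpha_{p,\theta},B)_q$, which does not exceed $r_n$ since any $S_n^B$ maps into $B$. I fix $k$ with $2^{dk}\asymp m\asymp Cn$ for a large absolute constant $C$, and, using the Bernstein inequality $\|f\|_{B^\alpha_{\infty,\theta}}\le C2^{\alpha k}\|f\|_\infty$ on ${\bf V}(k)$ exactly as in Theorem \ref{[Thnu>>]}, put $f_\omega:=c\,2^{-\alpha k}\sum_{s\in J^d(k)}\omega_s M_{k,s}\in SB^\alpha_{p,\theta}$ for $\omega\in\{-1,1\}^m$. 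By \eqref{StabIneq.2} and the Varshamov–Gilbert bound there is $\Omega_0$ with $|\Omega_0|\ge 2^{c_1 m}$ and $\|f_\omega-f_{\omega'}\|_q\gg 2^{-\alpha k}2^{-dk/q}m^{1/q}\asymp 2^{-\alpha k}$ for distinct $\omega,\omega'\in\Omega_0$. Now suppose $\dimp(B)\le n$ and $B$ approximates every $f_\omega$ within $\varepsilon:=c_2 2^{-\alpha k}$; clipping each approximant to $[-\|f_\omega\|_\infty,\|f_\omega\|_\infty]$ is a monotone post‑composition, so it does not increase the pseudo‑dimension while forcing the uniform range bound $\ll 2^{-\alpha k}$ and keeping the $\varepsilon$‑approximation. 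Haussler's covering estimate for classes of pseudo‑dimension $\le n$ then produces an $\varepsilon$‑net of cardinality $(CM/\varepsilon)^{c_3 n}=2^{c_4 n}$, since $M/\varepsilon\asymp 1$; but the $2^{c_1m}$ separated approximants force $2^{c_1 m}\le 2^{c_4 n}$, which is false once $C$ (hence $m\asymp Cn$) is chosen large. Thus $E(SB^\alpha_{p,\theta},B)_q>\varepsilon\gg n^{-\alpha/d}$ for every admissible $B$, proving $r_n\gg n^{-\alpha/d}$. I expect the main obstacles to be precisely this combinatorial step — the covering‑number bound for finite‑pseudo‑dimension classes in the quasi‑normed regime $q<1$, where the triangle inequality is only a quasi‑inequality, together with the truncation that guarantees a uniform range so that $M/\varepsilon\asymp1$ — and, on the upper side, the scale‑wise summation $\log_2 T\ll n$ that keeps the pseudo‑dimension free of the spurious logarithmic factor.
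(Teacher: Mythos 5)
Your proposal is correct in substance, but it takes a genuinely different --- and much more self-contained --- route than the paper, whose proof of this theorem is essentially a citation: the upper bound \eqref{UpperBound_R} is quoted from \cite[Theorem 3.1]{Di7}, where an explicit (non-continuous) adaptive algorithm together with a set $B \subset \Sigma_n({\bf M})$ satisfying $\dimp(B) \le n$ is constructed, and the lower bound is obtained from the one-line reduction $r_n(SB^\alpha_{p,\theta})_q \ \ge \ \rho_n(SB^\alpha_{p,\theta})_q$ combined with $\rho_n(SB^\alpha_{p,\theta})_q \gg n^{-\alpha/d}$ from \cite[Theorem 5.3]{Di7}. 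You instead (a) reuse the continuous algorithm $G$ of Theorem \ref{ThUpperBound} and supply the missing capacity estimate via the union-of-subspaces bound $\dimp\bigl(\bigcup_P V_P\bigr) \ll \max_P \dim V_P + \log_2 T$, with the scale-wise count $\log_2 T \ll \sum_{k > \bar k} n(k)(k - \bar k) \ll n$; this is sound (it is the same Sauer--Warren-type sign-pattern counting that underlies \cite{Di7} and \cite{RM2}, and $\dimp(V_P) = \dim V_P$ by \cite{H1}), and it buys the extra information that one and the same continuous algorithm certifies both the $\nu_n$ and the $r_n$ upper bounds, whereas the paper's cited construction is a different, non-continuous one; and (b) you reprove the $\rho_n$ lower bound from scratch by a Varshamov--Gilbert packing in ${\bf V}(k)$ with $2^{dk} \asymp Cn$, truncation of the approximants, and Haussler's covering estimate, rather than citing it --- again essentially the Ratsaby--Maiorov argument. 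Two caveats on (b), both routine: Haussler's estimate controls $L_1(\mu)$-covering numbers only, so the problematic endpoint is not only $q<1$ (which you flagged) but also $q=\infty$, where sup-norm covering numbers are not controlled by pseudo-dimension at all; the repair is to transfer both the separation and the approximation to $L_1$, which is harmless here since by \eqref{StabIneq.2} the packing $\{f_\omega\}$ is already $\gg 2^{-\alpha k}$-separated in $L_1$ and $\|\cdot\|_1 \le \|\cdot\|_\infty$ on $\II^d$. For $0<q<1$, Jensen's inequality on the probability space $\II^d$ gives $\|f\|_q \le \|f\|_1$, so an $L_1$-net is an $L_q$-net while the $L_q$-separation of the packing comes directly from \eqref{StabIneq.2}, and the quasi-triangle inequality costs only a constant; your monotone-clipping claim for pseudo-dimension is likewise a standard lemma. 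With these repairs executed, your argument is complete and independent of \cite{Di7}.
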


\begin{proof} 
The inequality \eqref{UpperBound_R} and therefore, the upper bound of \eqref{asympR}
was proven in \cite[Theorem 3.1]{Di7}. 

The lower bound 
follows from the inequality 
$r_n(SB^\alpha_{p,\theta})_q \ \ge \ \rho_n(SB^\alpha_{p,\theta})_q$ and 
the inequality 
\begin{equation*} 
 \rho_n(SB^\alpha_{p,\theta})_q  
\ \gg \
n^{- \alpha / d}.
\end{equation*}
which was proven in \cite[Theorem 5.3]{Di7}
\end{proof}

\begin{theorem} 
Let $ p,q , \theta, \alpha $  satisfy Condition \eqref{Condition}. 
Then for the $d$-variable Besov class 
$SB^\alpha_{p,\theta}$,  there is 
the following asymptotic order
\begin{equation} \label{asympE}
e_n(SB^\alpha_{p,\theta})_q 
\ \asymp \ 
n^{- \alpha / d} . 
\end{equation}
If in addition, $\alpha < 2r$, we can explicitly construct  a subset  $B$ in
$\Sigma_n({\bf M})$ having $|B| \le 2^n$, and a 
sampling recovery method $S_n^B$
of the form \eqref{S_n^B(f)}, such that
\begin{equation} \label{UpperBndE}
\sup_{f \in SB^\alpha_{p,\theta}} \|f - S_n^B(f)\|_q 
\ll 
n^{- \alpha / d}.
\end{equation}
\end{theorem}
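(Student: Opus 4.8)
The plan is to obtain \eqref{asympE} by combining a lower bound derived from entropy numbers with the upper bound \eqref{UpperBndE} produced by a quantized B-spline construction. For the lower bound I would argue as in the treatment of $r_n$: the infimum defining $e_n(SB^\alpha_{p,\theta})_q$ runs only over sampling algorithms $S_n^B$, a subclass of all maps $S^B\colon SB^\alpha_{p,\theta}\to B$ with $|B|\le 2^n$, so comparison with the best-approximation functional yields $e_n(SB^\alpha_{p,\theta})_q\ge\varepsilon_n(SB^\alpha_{p,\theta})_q$. It then remains to quote the entropy-number bound $\varepsilon_n(SB^\alpha_{p,\theta})_q\gg n^{-\alpha/d}$, a classical estimate for the Besov embedding (the inverse form of the $\varepsilon$-entropy introduced in \cite{KT}), which holds because Condition \eqref{Condition} forces $\alpha/d>(1/p-1/q)_+$. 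An even shorter, self-contained route is to note that $|B|\le 2^n$ implies $\dimp(B)\le\log_2|B|\le n$, whence $e_n(SB^\alpha_{p,\theta})_q\ge r_n(SB^\alpha_{p,\theta})_q$ and the bound follows at once from the already-established order of $r_n$.

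For the upper bound I would realize the approximant through nets of the coefficient balls of the B-spline expansion. By Theorem \ref{Theorem:Representation} every $f\in SB^\alpha_{p,\theta}$ admits the expansion \eqref{eq:B-splineRepresentation} with $\|\{c_{k,s}(f)\}\|_{p,k}\ll 2^{-(\alpha-d/p)k}$, so the scale-$k$ coefficient vector lies in a fixed multiple of the unit ball of $\ell_p^{m(k)}$, $m(k)\asymp 2^{dk}$, and by the equivalence \eqref{StabIneq.2} its $L_q$ size equals that of the matching vector in $\ell_q^{m(k)}$, scaled by $2^{-dk/q}$. For the scales $k\le k^*$, with $2^{dk^*}$ a suitable power of $n$ so that the tail \eqref{ineq:[|f - Q_m(f)|_q]} is $\ll n^{-\alpha/d}$, I would fix at each scale a finite net of this ball in the $\ell_q^{m(k)}$ metric and let $B$ be the finite family of all B-spline sums obtained by rounding every scale's coefficient vector to its nearest net point. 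Because the near-optimal net points are sparse, with total support at most $n$, each element of $B$ lies in $\Sigma_n({\bf M})$; and since the $c_{k,s}(f)$ are, again by Theorem \ref{Theorem:Representation}, linear combinations of at most $N$ sampled values of $f$, the operator that reads these samples, rounds to the nets, and assembles the sum is a sampling algorithm of the form \eqref{S_n^B(f)}.

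The crux, and the main obstacle, is to choose the net cardinalities so that their product over $k\le k^*$ stays below $2^n$ while the sum of the net radii remains $\ll n^{-\alpha/d}$. A naive coordinatewise rounding of the $\asymp n$ significant coefficients would cost $\asymp n\log n$ bits, and thinning to $n/\log n$ terms would instead lose a factor $(\log n)^{\alpha/d}$ in the rate; the log-free order is recovered only by invoking the sharp entropy numbers of the finite-dimensional embeddings $\ell_p^{m(k)}\hookrightarrow\ell_q^{m(k)}$ and balancing the $n$-bit budget against the net radii across scales so that the total error is $\asymp n^{-\alpha/d}$ with no logarithmic factor. This scale-by-scale rate--distortion balance is precisely the sampling analogue of the entropy construction given for $e_n$ in \cite{Di7}. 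Once it is fixed, summing the tail and the net contributions through the quasi-triangle inequality \eqref{ineq:IneqSumf_k} yields \eqref{UpperBndE}, and together with the lower bound this establishes \eqref{asympE}.
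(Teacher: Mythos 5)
Your proposal is correct and takes essentially the same route as the paper, whose proof is by citation to \cite{Di7}: the lower bound is obtained there exactly as you argue, from $e_n(SB^\alpha_{p,\theta})_q \ge \varepsilon_n(SB^\alpha_{p,\theta})_q$ together with the entropy estimate $\varepsilon_n(SB^\alpha_{p,\theta})_q \gg n^{-\alpha/d}$ of \cite[Theorem 5.5]{Di7}, while the upper bound \eqref{UpperBndE} is quoted from \cite[Theorem 4.1]{Di7}, which is precisely the scale-by-scale quantized-net construction you sketch and correctly identify as the crux. Your alternative lower-bound route, $|B|\le 2^n$ implies $\dimp(B)\le \log_2|B|\le n$ and hence $e_n(SB^\alpha_{p,\theta})_q \ge r_n(SB^\alpha_{p,\theta})_q$, is also sound (each sign pattern requires a distinct element of $B$), though it ultimately rests on the same lower-bound machinery from \cite{Di7} used for $r_n$.
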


\begin{proof} 
The inequality \eqref{UpperBndE} and therefore, the upper bound of \eqref{asympE}
was proven in \cite[Theorem 4.1]{Di7}. 

The lower bound 
follows from the inequality 
$e_n(SB^\alpha_{p,\theta})_q \ \ge \ \varepsilon_n(SB^\alpha_{p,\theta})_q$ and 
the inequality 
\begin{equation*} 
 \varepsilon_n(SB^\alpha_{p,\theta})_q  
\ \gg \
n^{- \alpha / d}.
\end{equation*}
which was proven in \cite[Theorem 5.5]{Di7}
\end{proof}

\noindent
{\bf Acknowledgments.} This work was supported by Vietnam National Foundation for Science and 
Technology Development (NAFOSTED).

\end{document}